\newtheorem{theorem}{Theorem}[section] % 1st argument is your name for it
\newtheorem{lemma}[theorem]{Lemma}     % 2nd argument is what is printed
\newtheorem{corollary}[theorem]{Corollary}
\newtheorem{remark}{Remark}
\newcommand{\T}{\mathbb{T}}
\newcommand{\D}{\mathbb{D}}
\newcommand{\E}{\mathbb{E}}
\newcommand{\la}{\lambda}
\newcommand{\al}{\alpha}
\newcommand{\ph}{\varphi}
\newcommand{\up}{\upsilon}
\newcommand{\C}{\mathbb{C}}
\newcommand{\R}{\mathbb{R}}
\newcommand{\G}{\mathbb{G}}
\newcommand{\nn}{\nonumber}
\newcommand{\ds}{\displaystyle}
\newcommand{\dia}{\diamond}
\renewcommand{\S}{\mathcal{S}}
\providecommand{\Aut}{\mathop{\rm Aut}\nolimits}
\numberwithin{equation}{section}
\numberwithin{theorem}{section}
\numberwithin{remark}{section}
\title{The automorphism group of the tetrablock} 
\author{ N. J. Young}
\begin{document}
\begin{abstract}
The tetrablock is shown to be inhomogeneous and its automorphism group is determined.  A type of Schwarz lemma for the tetrablock is proved.  The action of the automorphism group is described in terms of a certain natural foliation by complex geodesic discs.
\end{abstract}
\maketitle
 \footnotetext{\textit{Math. Subject Classifications}: 32M12 (primary), 30C80, 93D21 (secondary)}

\section{Introduction} \label{intro}
The {\em tetrablock} is the domain $\E$ in $\C^3$ defined by
\[
\E= \{ (x_1,x_2,x_3)\in\C^3: 1-x_1z-x_2 w +x_3 zw \neq 0 \mbox{ for all } z,w\in\C \mbox{ such that } |z|\leq1, \, |w| \leq 1 \}.
\]
$\E$ is a non-convex domain whose intersection with $\R^3$ is a regular tetrahedron.  It is of interest because of its relation to a certain function-theoretic problem that arises in control engineering; see Section \ref{conclud} below.  In this paper we answer three questions: is $\E$ homogeneous? Is $\E$ an analytic retract of the unit ball of the space of $2\times 2$ matrices? What is the full group of automorphisms of $\E$?  Here
an {\em automorphism} of a domain $\Omega$ is an analytic bijective self-map of $\Omega$ having an analytic inverse. 

  In Section \ref{schwarz} we prove a Schwarz lemma for $\E$:  we find necessary and sufficient conditions on $y\in\C^3$ for the existence of an analytic map $\ph:\D\to\E$, where $\D$ is the open unit disc, such that $\ph(0)=(0,0,0)$ and $\ph'(0)=y$ and we give a formula for a suitable $\ph$.  This result enables us to show in Section \ref{neous} that $\E$ is inhomogeneous; the proof uses E. Cartan's classification of bounded homogeneous domains in $\C^3$ and a little elementary theory of $J^*$-algebras.  We also show that $\E$ is not an analytic retract of any bounded symmetric homogeneous domain of dimension less than 16.  In Section \ref{aut} we determine the automorphism group of $\E$, thereby verifying a conjecture made in \cite{awy}.   In Section \ref{leaves} we show that the action of the automorphism group of $\E$ can be understood in terms of a certain natural foliation of $\E$ by analytic discs; the group permutes the leaves of this foliation transitively, and the orbits of the group are naturally parametrised by the interval $[0,1)$.

 The connection between the geometry of $\E$ and the problem of ``$\mu$-synthesis'' from control engineering is outlined in \cite[Section 9]{awy} and references cited there.

We shall denote the closure of $\E$ by $\bar\E$ and the closed unit disc by $\Delta$.  We write $O$ for the origin $(0,0,0)$ in $\C^3$.  The automorphism group of a domain $\Omega$ will be denoted by $\Aut\Omega$.  If $H,K$ are Hilbert spaces then $\mathcal{L}(H,K)$ denotes the linear space of bounded linear operators from $H$ to $K$ with the operator norm.   $\C^{2\times 2}$ denotes the space of $2\times 2$ complex matrices with the standard $C^*$ norm.  An important role in the analysis of $\E$ is played by the map
\begin{equation} \label{defpi}
\pi: \C^{2\times 2} \to \C^3:[a_{ij}] \mapsto (a_{11},a_{22}, \det[a_{ij}]).
\end{equation}
We shall write $\S_{2\times 2}$ for the set of analytic functions $F:\D\to \C^{2\times 2}$ such that $||F(\la)|| < 1$ for all $\la\in\D$.

Let us recapitulate here some of the eleven different characterizations of $\E$ from \cite{awy}.  For present purposes four will suffice.  One of them uses the rational function
\begin{equation} \label{defPsi}
\Psi(z,x_1,x_2,x_3) = \frac{x_3z-x_1}{x_2z-1}.
\end{equation}
Roughly speaking we identify $x\in\C^3$ with the linear fractional transformation $\Psi(.,x)$; then $x\in\E$ if and only if $x$ corresponds to a linear fractional transformation whose supremum on $\D$ is less than one.  This statement is not quite precise, since if $x_1x_2=x_3$ then $\Psi(.,x)$ is constant and equal to $x_1$.  Points $x$ for which $x_1x_2=x_3$ are called {\em triangular points}; they require special treatment.
\begin{theorem}\label{recap}
For $x=(x_1,x_2,x_3)\in\C^3$ the following are equivalent.
\begin{enumerate}
\item $x\in\E$;
\item $|x_1-\bar x_2x_3|+ |x_1x_2-x_3| < 1-|x_2|^2$;
\item $\sup_{z\in\D}|\Psi(z,x)| < 1$ and if $x_1x_2=x_3$ then, in addition, $|x_2|<1$;
\item there exists a symmetric matrix $A\in\C^{2\times 2}$ such that $||A||<1$ and $\pi(A)=x$;
\item there exist $\beta_1, \beta_2\in\C$ such that $|\beta_1|+|\beta_2| < 1$ and
\[
x_1=\beta_1 + \bar\beta_2 x_3, \quad x_2=\beta_2+\bar\beta_1 x_3.
\]
\end{enumerate}
\end{theorem}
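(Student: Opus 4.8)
The plan is to prove the cycle (4) $\Rightarrow$ (1) $\Rightarrow$ (3) $\Rightarrow$ (2) $\Rightarrow$ (5) $\Rightarrow$ (4). The cleanest link, which I would record first, is (4) $\Rightarrow$ (1). If $A=[a_{ij}]$ is any $2\times2$ matrix with $\pi(A)=x$, then expanding a determinant gives
\[
1-x_1z-x_2w+x_3zw=\det\bigl(I-\diag(z,w)\,A\bigr)\qquad(z,w\in\C);
\]
hence if $\|A\|<1$ then $\|\diag(z,w)A\|\le\|A\|<1$ for all $z,w\in\Delta$, so $I-\diag(z,w)A$ is invertible, its determinant does not vanish, and $x\in\E$.

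For (1) $\Rightarrow$ (3) I would eliminate a variable. Fixing $w\in\Delta$, the expression $1-x_1z-x_2w+x_3zw$ is affine in $z$, and a short computation shows it is non-zero for every $z\in\Delta$ exactly when $x_2w\neq1$ and $|\Psi(w,x)|<1$. Letting $w$ range over $\Delta$ and using the maximum principle to pass from the supremum of $|\Psi(\cdot,x)|$ over $\Delta$ to the supremum over $\D$ (legitimate once $|x_2|<1$, when $\Psi(\cdot,x)$ is holomorphic on a neighbourhood of $\Delta$), one sees that $x\in\E$ is equivalent to the conjunction of $\sup_{z\in\D}|\Psi(z,x)|<1$ and $|x_2|<1$. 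The somewhat awkward phrasing of (3) reflects the fact that when $x_1x_2\neq x_3$ the condition $\sup_{z\in\D}|\Psi(z,x)|<1$ already forces $|x_2|<1$ --- an uncancelled pole of $\Psi(\cdot,x)$ in $\Delta$ would make the supremum infinite --- whereas at a triangular point $\Psi(\cdot,x)$ degenerates to the constant $x_1$ and $|x_2|<1$ must be demanded separately. I would treat triangular points as a trivial special case throughout.

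For (3) $\Rightarrow$ (2) I would write $|\Psi(z,x)|<1$ as $|x_3z-x_1|^2<|x_2z-1|^2$, evaluate on $|z|=1$, and optimise over the circle, obtaining $2|x_2-\bar x_1x_3|<1+|x_2|^2-|x_1|^2-|x_3|^2$. Reconciling this with the form of (2) actually displayed is the step I expect to be most delicate. It rests on the identity
\[
|x_1-\bar x_2x_3|^2-|x_1x_2-x_3|^2=(1-|x_2|^2)(|x_1|^2-|x_3|^2),
\]
on its partner obtained by interchanging $x_1$ and $x_2$, and on the fact that $\E$ --- hence each of the five conditions --- is invariant under $x_1\leftrightarrow x_2$ (swap $z$ and $w$ in the definition); one must also keep track of the side conditions $|x_2|<1$ and $|x_3|<1$ that come with the different forms. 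For (2) $\Rightarrow$ (5) I would first note that (2) forces $|x_3|<1$ (elementary, from $|x_1x_2-x_3|<1-|x_2|^2$ and its $x_1\leftrightarrow x_2$ twin), so that the system $x_1=\beta_1+\bar\beta_2x_3,\ x_2=\beta_2+\bar\beta_1x_3$ is uniquely solvable for $\beta_1,\beta_2$: conjugating the second equation turns the coefficient matrix into $\left(\begin{smallmatrix}1&x_3\\ \bar x_3&1\end{smallmatrix}\right)$, with determinant $1-|x_3|^2\neq0$, and $|\beta_1|+|\beta_2|<1$ becomes (2) again by the same identities. Finally, for (5) $\Rightarrow$ (4) I would take $A$ symmetric with diagonal $(x_1,x_2)$ and off-diagonal entries a square root of $x_1x_2-x_3$, substitute the expressions for $x_1,x_2$ in terms of $\beta_1,\beta_2,x_3$, and verify $\|A\|<1$ through the two scalar inequalities $1-|x_1|^2-|x_1x_2-x_3|>0$ and $\det(I-A^*A)>0$, both of which follow from $|\beta_1|+|\beta_2|<1$ after simplification. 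To sum up: the structural content is the determinant identity and the one-variable elimination, both short; everything else is computation, and the main obstacle is the bookkeeping involved in passing between the symmetric-looking characterisations (1), (4) and the asymmetric ones (2), (3), (5), and in handling the degenerate cases (triangular points, $|x_2|=1$, $|x_3|=1$).
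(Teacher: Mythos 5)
First, a point of reference: the paper does not prove Theorem \ref{recap} at all --- it quotes it and writes ``For the proof see \cite[Theorem 2.1]{awy}'' --- so there is no in-paper argument to set yours against; the comparison below is with the argument of that reference in spirit. Your skeleton (a cycle of implications) is reasonable, and two links are complete as written: the determinant identity $1-x_1z-x_2w+x_3zw=\det(I-\diag(z,w)A)$ does give (4)$\Rightarrow$(1) (indeed for any, not necessarily symmetric, strict contraction with $\pi(A)=x$), and the elimination of $z$ correctly reduces (1) to the conjunction of $|x_2|<1$ and $\sup_{\D}|\Psi(\cdot,x)|<1$, which is (3).

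The genuine gap is in the chain (3)$\Rightarrow$(2)$\Rightarrow$(5). Maximising over $|z|=1$ lands you on $2|x_2-\bar x_1x_3|<1+|x_2|^2-|x_1|^2-|x_3|^2$, while unwinding $|\beta_1|+|\beta_2|<1$ in (5) (with your correct formulae $\beta_1=(x_1-\bar x_2x_3)/(1-|x_3|^2)$, $\beta_2=(x_2-\bar x_1x_3)/(1-|x_3|^2)$) lands on $|x_1-\bar x_2x_3|+|x_2-\bar x_1x_3|<1-|x_3|^2$; so your plan needs to pass among three differently-shaped inequalities built from $|x_1-\bar x_2x_3|$, $|x_2-\bar x_1x_3|$, $|x_1x_2-x_3|$ and the weights $1-|x_i|^2$. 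The identities you cite relate \emph{differences of squares} of these quantities, whereas the target conditions bound \emph{sums of the moduli}; how to bridge that is not shown, and it is exactly where the content of the theorem sits. Moreover the appeal to ``the fact that each of the five conditions is invariant under $x_1\leftrightarrow x_2$'' is circular at this stage: the swap-invariance of (2) is not visible from its form and is only known once the equivalence with (1) has been proved. The step closes cleanly with a device you do not use: for $|x_2|<1$ the M\"obius map $\Psi(\cdot,x)$ carries $\Delta$ onto the closed disc with centre $(x_1-\bar x_2x_3)/(1-|x_2|^2)$ and radius $|x_1x_2-x_3|/(1-|x_2|^2)$, so that $\sup_{\Delta}|\Psi(\cdot,x)|$ equals $\bigl(|x_1-\bar x_2x_3|+|x_1x_2-x_3|\bigr)/(1-|x_2|^2)$ and (3)$\Leftrightarrow$(2) is immediate, with no reconciliation needed; analogous explicit computations (rather than ``the same identities'') are what is required for (2)$\Leftrightarrow$(5) and for $\|A\|<1$ in (5)$\Rightarrow$(4). (Incidentally, $|x_3|<1$ follows from (2) without invoking its twin: $|x_3|(1-|x_2|^2)=|x_3-x_1x_2+x_2(x_1-\bar x_2x_3)|\le|x_1x_2-x_3|+|x_2|\,|x_1-\bar x_2x_3|<1-|x_2|^2$.)
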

For the proof see \cite[Theorem 2.1]{awy}.  
Some basic complex geometry of $\E$ is described in this reference.  For example, $\E$ is starlike about the origin, is polynomially convex, has a distinguished boundary of $3$ real dimensions and admits a group of automorphisms of $6$ real dimensions. 

Condition (iv) reveals a close connection between $\E$ and the two Cartan domains $R_I(2,2)$ and $R_{II}(2)$, defined to be the open unit balls in the spaces of $2\times 2$ matrices and symmetric $2\times 2$ matrices respectively:
\[
\E= \pi(R_I(2,2)) = \pi(R_{II}(2)).
\]
The homogeneity of $R_I(2,2)$ was used in \cite{awy} to prove a Schwarz lemma for $\E$, that is, a criterion for the solvability of certain 2-point interpolation problems for analytic functions from $\D$ to $\E$.  In the next section a similar method is used to prove the other sort of Schwarz lemma for $\E$: a criterion is found for the existence of an analytic function from $\D$ to $\E$ with a prescribed value and derivative at a single point.

\section{A Schwarz lemma for the tetrablock} \label{schwarz}
\begin{theorem} \label{newschw}
Let $y\in\C^3$.  There exists an analytic map $\ph: \D\to\E$ such that $\ph(0)= O$ and $\ph'(0)=y$ if and only if
\begin{equation} \label{ycond}
\max\{|y_1|,|y_2|\} + |y_3| \leq 1.
\end{equation}
\end{theorem}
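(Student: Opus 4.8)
The plan is to prove the two implications separately: necessity from the function-theoretic characterization in Theorem~\ref{recap}(iii), and sufficiency from the matricial one in Theorem~\ref{recap}(iv). For \emph{necessity}, suppose $\ph:\D\to\E$ is analytic with $\ph(0)=O$ and $\ph'(0)=y$, and fix a point $z$ in the closed disc $\Delta$. For every $x\in\E$, Theorem~\ref{recap}(ii) gives $|x_2|<1$, so $x_2z\neq1$, the value $\Psi(z,x)$ is defined, and $|\Psi(z,x)|\le1$ by Theorem~\ref{recap}(iii) and continuity in $z$. Hence $g_z(\la):=\Psi(z,\ph(\la))$ is an analytic map of $\D$ into $\Delta$ with $g_z(0)=\Psi(z,O)=0$, so Schwarz's lemma gives $|g_z'(0)|\le1$. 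A direct computation from \eqref{defPsi} yields $g_z'(0)=y_1-zy_3$; thus $|y_1-zy_3|\le1$ for all $z\in\Delta$, and taking the supremum over $z$ gives $|y_1|+|y_3|\le1$. Since interchanging $z$ and $w$ in the definition of $\E$ shows that $(x_1,x_2,x_3)\mapsto(x_2,x_1,x_3)$ is an automorphism of $\E$, applying the inequality just proved to this automorphism composed with $\ph$ gives $|y_2|+|y_3|\le1$, and together these yield \eqref{ycond}.

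For \emph{sufficiency}, assume \eqref{ycond} holds and set $m=\max\{|y_1|,|y_2|\}$. Two cases are immediate: if $y_3=0$, take $\ph=\pi\circ(\la\mapsto\la\,\diag(y_1,y_2))$, which maps $\D$ into $\pi(R_I(2,2))=\E$ because $\|\la\,\diag(y_1,y_2)\|=m|\la|<1$; if $m=0$, take $\ph(\la)=(0,0,y_3\la)$, which lies in $\E$ by Theorem~\ref{recap}(ii). So assume henceforth $m>0$ and $y_3\neq0$, and seek $\ph$ in the form $\pi\circ F$ with $F\in\S_{2\times2}$. A short computation with \eqref{defpi} shows that if $F(0)=W:=\begin{pmatrix}0&b_0\\0&0\end{pmatrix}$ for some $b_0\neq0$, then $(\pi\circ F)(0)=O$ and $(\pi\circ F)'(0)=\bigl(F_{11}'(0),\,F_{22}'(0),\,-b_0F_{21}'(0)\bigr)$; hence it is enough to find, for a suitable $b_0$ with $0<|b_0|<1$, a function $F\in\S_{2\times2}$ with $F(0)=W$, $F_{11}'(0)=y_1$, $F_{22}'(0)=y_2$ and $F_{21}'(0)=-y_3/b_0$, the entry $F_{12}'(0)$ being free. (Taking $F(0)=0$ would force the third coordinate of $(\pi\circ F)'(0)$ to vanish, so $F(0)$ must be a nonzero element of $\pi^{-1}(O)$, namely a rank-one nilpotent; as $\pi^{-1}(O)$ contains no nonzero symmetric matrix, the symmetric model $R_{II}(2)$ is of no use here.)

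To produce such an $F$, I would transfer the problem to the origin. Take $|b_0|^2=|y_3|/(m+|y_3|)$ (so $0<|b_0|<1$), set $p=(1-|b_0|^2)^{-1/2}$, and let $M_W$ be the M\"obius automorphism of the ball $R_I(2,2)$ with $M_W(W)=0$; its derivative at $W$ is the linear map $B\mapsto PBQ$, where $P=(I-WW^*)^{-1/2}=\diag(p,1)$ and $Q=(I-W^*W)^{-1/2}=\diag(1,p)$. By the Schwarz lemma for the ball --- an analytic $G:\D\to R_I(2,2)$ with $G(0)=0$ and prescribed derivative $B$ at $0$ exists precisely when $\|B\|\le1$, realised by $G(\la)=\la B$ --- the function $F:=M_W^{-1}\circ(\la\mapsto\la\,PCQ)$ lies in $\S_{2\times2}$ with $F(0)=W$ and $F'(0)=C$ exactly when $\|PCQ\|\le1$, where $C=\begin{pmatrix}y_1&\xi\\-y_3/b_0&y_2\end{pmatrix}$ and $\xi:=F_{12}'(0)$ is at our disposal. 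Since $PCQ=\begin{pmatrix}py_1&p^2\xi\\-y_3/b_0&py_2\end{pmatrix}$, Parrott's theorem on norm-preserving completions lets us choose $\xi$ so that $\|PCQ\|=\bigl(p^2m^2+|y_3|^2/|b_0|^2\bigr)^{1/2}$, and the chosen value of $|b_0|$ --- precisely the one minimising this quantity over $0<|b_0|<1$ --- makes the right-hand side equal $m+|y_3|\le1$. Hence an $F$ as required exists, and $\ph:=\pi\circ F$ solves the problem.

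I expect the one genuinely delicate step to be exactly this construction when $y_3\neq0$: recognising that $F(0)$ must be a rank-one nilpotent, and then seeing that the Schwarz--Pick estimate at such a non-central point closes up only through the combined effect of the Parrott completion over the free entry $F_{12}'(0)$ and the optimisation of $|b_0|$, which together recover precisely the bound \eqref{ycond}. The remaining ingredients --- the first-order computation in the necessity argument and the standard formula for the derivative of $M_W$ at $W$ --- are routine.
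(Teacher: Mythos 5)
Your proof is correct. The sufficiency half is essentially the paper's own argument: the same lift through $\pi$ to $R_I(2,2)$, the same recognition that $F(0)$ must be a rank-one nilpotent $\left[\begin{smallmatrix}0&b_0\\0&0\end{smallmatrix}\right]$, the same transfer to the origin by the matricial M\"obius map (your $P,Q$ agree with the paper's $D_{Z^*}^{-1}, D_Z^{-1}$), and the same Parrott completion of the free $(1,2)$ entry. The only divergence is the choice of the parameter: the paper takes $|b_0|^2=1-\max\{|y_1|,|y_2|\}$ and verifies separately that the relevant column and row of $Y(\xi)$ have norm at most $1$, whereas you take the optimising value $|b_0|^2=|y_3|/(m+|y_3|)$, for which the Parrott bound collapses exactly to $(m+|y_3|)^2\le 1$; both choices work, and yours makes the role of the hypothesis \eqref{ycond} slightly more transparent (it also explains why the bound is sharp). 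The necessity half is where you genuinely differ: the paper invokes the two-point Schwarz lemma of \cite{awy} (Theorem 1.2 there) and passes to the limit $\la\to 0$, while you apply the classical scalar Schwarz lemma to $g_z=\Psi(z,\ph(\cdot))$ for each $z\in\Delta$, compute $g_z'(0)=y_1-zy_3$, and take the supremum over $z$, using the flip to get the symmetric inequality. Your route is more self-contained, needing only the elementary fact that $\Psi(z,\cdot)$ maps $\E$ into $\Delta$ for $z\in\Delta$ (Theorem \ref{recap}(iii) plus the maximum principle) rather than the harder two-point interpolation theorem; the paper's route is shorter on the page because the heavy lifting is outsourced to the citation. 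One cosmetic point: Parrott gives $\|PCQ\|$ equal to the maximum of the known column and row norms, which is only bounded above by $\bigl(p^2m^2+|y_3|^2/|b_0|^2\bigr)^{1/2}$ rather than equal to it, but the inequality is all you use, so nothing is affected.
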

\begin{proof}
Suppose such a $\ph$ exists.  Write $\ph=(\ph_1,\ph_2,\ph_3)$.  By 
\cite[Theorem 1.2]{awy}, for any $\la\in\D$,
\[
\max\left\{\frac{|(\ph_1-\bar\ph_2\ph_3)(\la)|+|(\ph_1\ph_2-\ph_3)(\la)|}{1-|\ph_2(\la)|^2},\frac{|(\ph_2-\bar\ph_1\ph_3)(\la)|+|(\ph_1\ph_2-\ph_3)(\la)|}{1-|\ph_1(\la)|^2} \right\} \leq |\la|.
\]
Divide through by $|\la|$ and let $\la\to 0$ to obtain
\[
\max\{|\ph_1'(0)|+|\ph_3'(0)|, |\ph_2'(0)|+|\ph_3'(0)|\} \leq 1.
\]
Since $\ph'(0)=y$ we have
\[
\max\{|y_1|,|y_2|\}+|y_3| \leq 1,
\]
and the inequality (\ref{ycond}) is necessary for the existence of $\ph$.

Conversely, suppose that (\ref{ycond}) holds.  We can suppose that $|y_1| \geq |y_2|$.
If $y_1=0$ then also $y_2=0$, $|y_3| \leq 1$  and the function $\ph(\la)=(0,0,\la y_3)$ is sufficient.  We may therefore assume that $y_1 \neq 0$.  We shall construct $F\in\S_{2\times 2}$ such that $\ph = \pi\circ F$ has the desired properties.  Note that since $\pi(R_I(2,2))=\E$, for $F\in\S_{2\times 2}$,  $\ph$ maps $\D$ into $\E$.
Let $\zeta\in\D$ be a number to be chosen later and let $F=[F_{ij}]$ satisfy
\begin{equation} \label{valF0}
F(0)= \left[\begin{array}{cc} 0& \zeta \\ 0& 0 \end{array}\right].
\end{equation}
Then $\ph(0)=O$ and
\begin{eqnarray*}
\ph'(0)&=& (\pi\circ F)'(0)=(F_{11}', F_{22}', F_{11}'F_{22}+F_{11}F_{22}'-F_{12}'F_{21}-F_{12}F_{21}')(0)\\
&=&(F_{11}', F_{22}', -\zeta F_{21}')(0).
\end{eqnarray*}
Thus $\ph'(0)= y$ if and only if
\begin{equation} \label{valFdash0}
F'(0) = \left[\begin{array}{cc} y_1 & * \\ -y_3/\zeta & y_2 \end{array}\right].
\end{equation}
Accordingly our task is to find $\zeta\in\D$ and $F\in\S_{2\times 2}$ such that the equations (\ref{valF0}) and (\ref{valFdash0}) are satisfied.

We shall use the matricial M\"obius transformation $\mathcal{M}_Z$ of $2\times 2$ matrices defined, for any strict $2\times 2$ contraction $Z$ by
\[
\mathcal{M}_Z(X) = -Z+ D_{Z^*}X(1-Z^*X)^{-1}D_Z,
\]
where $D_Z=(1-Z^*Z)^{\tfrac 12}$. The transformation $\mathcal{M}_Z$ is an automorphism of the unit ball $R_I(2,2)$ of $\C^{2\times 2}$, has inverse
$\mathcal{M}_{-Z}$ and maps $Z$ to $0$.  We have, for any $F\in\S_{2\times 2}$,
\begin{eqnarray}
(\mathcal{M}_Z\circ F)' &= & D_{Z^*}[F'(1-Z^*F)^{-1}+F(1-Z^*F)^{-1}Z^*F'(1-Z^*F)^{-1}]D_Z  \nn \\
	&=& D_{Z^*}(1-FZ^*)^{-1}F'(1-Z^*F)^{-1}D_Z.
\end{eqnarray}
Let 
\begin{equation} \label{defZ}
Z=\left[\begin{array}{cc} 0 & \zeta \\ 0 & 0 \end{array} \right]
\end{equation}
for some $\zeta\in\D$.
  Then $Z$ is a strict contraction and
\[
D_Z=\left[\begin{array}{cc} 1&0\\0&(1-|\zeta|^2)^{\tfrac 12} \end{array}\right], \quad 
D_{Z^*}=\left[\begin{array}{cc} (1-|\zeta|^2)^{\tfrac 12}&0\\0& 1\end{array}\right].
\]
Hence, if $F$ satisfies equations (\ref{valF0}) and (\ref{valFdash0}), then
\begin{eqnarray} \label{strcon}
(\mathcal{M}_Z\circ F)' (0)&= & D_{Z^*}^{-1}F'(0)D_Z^{-1}\nn \\
	&=& \left[\begin{array}{cc} \ds \frac{y_1}{(1-|\zeta|^2)^{\tfrac 12}} & \ds\frac{F'_{12}(0)}{1-|\zeta|^2} \\
 {}&     \\
{\ds -\frac{y_3}{\zeta} } &\ds \frac{y_2}{(1-|\zeta|^2)^{\tfrac 12}}  
\end{array} \right] .
\end{eqnarray}
If the required $F$ exists then, by the Schwarz Lemma for $R_I(2,2)$, the right hand side of equation (\ref{strcon}) is a strict contraction.  We shall show that $F$ exists by working back from equation (\ref{strcon}).

The choice $\zeta= \sqrt{1-|y_1|}$ in equation (\ref{strcon}) leads us to define
\begin{equation}\label{defY}
Y(\xi) = \left[ \begin{array}{cc} \ds \frac{y_1}{|y_1|^{\tfrac 12}} & \xi \\
	& \\
	\ds -\frac{y_3}{\sqrt{1-|y_1|}} &\ds \frac{y_2}{|y_1|^{\tfrac 12}} \end{array} \right]
\end{equation}
for some $\xi\in\C$.
  Since, by hypothesis, $|y_3| \leq 1 - |y_1|$, the first column of $Y(\xi)$ has norm
\[
 \left\{ |y_1| + \frac{|y_3|^2}{1-|y_1|} \right\}^{\tfrac 12} \leq
\left\{ |y_1| + \frac{|y_3|(1-|y_1|)}{1-|y_1|} \right\}^{\tfrac 12} = \{|y_1| + |y_3|\}^{\tfrac 12} \leq 1,
\]
and since $|y_2| \leq |y_1|$, the second row of $Y(\xi)$ also has norm at most $1$.  By Parrott's Theorem (\cite{par} or \cite[Theorem 12.22]{Y}) there exists $\xi \in\C$ such that $||Y(\xi)|| < 1$; in fact, a suitable choice is 
\begin{equation} \label{defxi}
\xi= \frac{y_1y_2\bar y_3\sqrt{1-|y_1|}}{|y_1|(1-|y_1|-|y_3|^2)}.
\end{equation}
Let $H(\la)=\la Y(\xi), \la\in\D$.  Then $H \in \S_{2\times 2}$ and
\[
H(0)=0, \quad H'(0) = Y(\xi).
\]
Define $F= \mathcal{M}_{-Z}\circ H$, where as before $Z$ is the right hand side of equation (\ref{valF0}), and now $\zeta=\sqrt{1-|y_1|}$.  Then $F\in\S_{2\times 2}$,
\[
F(0) = \mathcal{M}_{-Z}(0) = Z = \left[\begin{array}{cc} 0& \zeta \\ 0&0 \end{array}
\right]
\]
and
\begin{eqnarray*}
F'(0)&=&(\mathcal{M}_{-Z}\circ H)'(0) = (D_{Z^*}(1+HZ^*)^{-1}H'(1+Z^*H)^{-1}D_Z)(0)\\
	&=& D_{Z^*}Y(\xi) D_{Z}\\
	&=& \left[\begin{array}{cc} |y_1|^{\tfrac 12}&0 \\ 0 & 1\end{array}\right]
	\left[ \begin{array}{cc} \ds \frac{y_1}{|y_1|^{\tfrac 12}} & \xi \\
	\ds -\frac{y_3}{\zeta} &\ds \frac{y_2}{|y_1|^{\tfrac 12}} \end{array} \right] \left[\begin{array}{cc} 1 &0 \\ 0 & |y_1|^{\tfrac 12}\end{array}\right] \\
	&=& \left[ \begin{array}{cc} y_1 & \xi|y_1| \\ -y_3/\zeta & y_2 \end{array}\right].
\end{eqnarray*}
On comparison with equations (\ref{valF0}) and (\ref{valFdash0}) we find that $\ph=\pi\circ F$ satisfies the requirements of the theorem.
\end{proof}
We can extract from the above proof an explicit formula for $\ph$ satisfying the conditions of the theorem.
\begin{theorem} \label{formphi}
 Let $y\in\C^3$ be such that  $\max\{|y_1|,|y_2|\}+|y_3|\leq 1$.
Let $\ph:\D \to \C^3$ be given by 
\begin{equation}     \label{defph}
\ph(\la)=  \frac{\la }{1+\la\bar y_3C(y)}(y_1,y_2,C(y)\la+y_3)
\end{equation}
where
\begin{equation}\label {defC}
 C(y)= \left\{\begin{array}{clc} 0 & \mbox{ if } & y_1=y_2=0 \\
\ds\frac{y_1y_2(1-|y_1|)}{|y_1|(1-|y_1|-|y_3|^2)} & \mbox{ if } &  |y_2| \leq |y_1| \neq 0 \\
	& & \\
	\ds\frac{y_1y_2(1-|y_2|)}{|y_2|(1-|y_2|-|y_3|^2)} & \mbox{ if } &  |y_1| \leq |y_2| \neq 0. \end{array}
\right.
\end{equation}
Then $\ph$ is an analytic map from $\D$ to $\E$, $\ph(0)=O$ and $\ph'(0)=y$.
\end{theorem}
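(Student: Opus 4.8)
The plan is to recognise the formula (\ref{defph}) as $\pi\circ F$ for the function $F$ built in the proof of Theorem \ref{newschw}, and then to evaluate that composition explicitly. Suppose first that $0\neq|y_1|\geq|y_2|$, the principal case. There $F=\mathcal{M}_{-Z}\circ H$, where $Z$ is as in (\ref{defZ}) with $\zeta=\sqrt{1-|y_1|}$, and $H(\la)=\la Y(\xi)$ with $Y(\xi)$ as in (\ref{defY}) and $\xi$ as in (\ref{defxi}). Using $\mathcal{M}_{-Z}(X)=Z+D_{Z^*}X(1+Z^*X)^{-1}D_Z$ together with the diagonal forms of $D_Z$ and $D_{Z^*}$ (recall $1-|\zeta|^2=|y_1|$), I would compute $F(\la)=\mathcal{M}_{-Z}(\la Y(\xi))$ entry by entry. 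The matrix $1+\la Z^*Y(\xi)$ is lower triangular with determinant $1+\la\zeta\xi$, and the key elementary identity is that $\zeta\xi=\bar y_3\,C(y)$; this is precisely why the denominator $1+\la\bar y_3 C(y)$ of (\ref{defph}) appears. A short calculation then gives $F_{11}(\la)=\la y_1/(1+\la\bar y_3 C(y))$ and $F_{22}(\la)=\la y_2/(1+\la\bar y_3 C(y))$, together with explicit but bulkier expressions for $F_{12}$ and $F_{21}$.

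Applying $\pi$, the diagonal entries give $\ph_1$ and $\ph_2$ in agreement with (\ref{defph}) at once, so it remains to identify $\ph_3=F_{11}F_{22}-F_{12}F_{21}$, which has common denominator $(1+\la\bar y_3 C(y))^2$. Substituting the value (\ref{defxi}) of $\xi$ into its numerator and reducing, using repeatedly that $1-|\zeta|^2=|y_1|$ and $|y_3|^2+(1-|y_1|-|y_3|^2)=1-|y_1|=\zeta^2$, I would show that this numerator equals $\la\bigl(C(y)\la+y_3\bigr)\bigl(1+\la\bar y_3 C(y)\bigr)$, so that one factor $1+\la\bar y_3 C(y)$ cancels and $\ph_3(\la)=\la\bigl(C(y)\la+y_3\bigr)/\bigl(1+\la\bar y_3 C(y)\bigr)$ as in (\ref{defph}). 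Concretely this amounts to checking that the $\la$-, $\la^2$- and $\la^3$-coefficients of that numerator are $y_3$, $C(y)(1+|y_3|^2)$ and $C(y)^2\bar y_3$ respectively. This collapse is the one genuinely computational step and the place where care is needed; everything else is bookkeeping.

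The three assertions of the theorem then follow from facts already established. Indeed $\ph(0)=\pi(F(0))=\pi(Z)=O$; the identity $\ph'(0)=(\pi\circ F)'(0)=y$ was verified in the proof of Theorem \ref{newschw} via equations (\ref{valF0}) and (\ref{valFdash0}); and since $F\in\S_{2\times 2}$ there — Parrott's theorem giving $\|Y(\xi)\|\leq 1$, hence $\|H(\la)\|\leq|\la|<1$ on $\D$, and $\mathcal{M}_{-Z}$ being an automorphism of $R_I(2,2)$ — while $\pi(R_I(2,2))=\E$, the map $\ph=\pi\circ F$ carries $\D$ into $\E$. Analyticity of $\ph$ throughout $\D$ follows because $\|Z\|<1$ and $\|Y(\xi)\|\leq 1$ force $1+\la Z^*Y(\xi)$, whose determinant is $1+\la\bar y_3 C(y)$, to be invertible for every $\la\in\D$.

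Finally I would dispose of the remaining cases. If $y_1=y_2=0$ then $C(y)=0$, the hypothesis reduces to $|y_3|\leq 1$, and (\ref{defph}) becomes $\ph(\la)=(0,0,\la y_3)$, which is plainly analytic with $\ph(0)=O$ and $\ph'(0)=y$, and lies in $\E$ by criterion (ii) of Theorem \ref{recap}. If $0\neq|y_2|\geq|y_1|$, I would use the involution $\sigma(x_1,x_2,x_3)=(x_2,x_1,x_3)$, which is an automorphism of $\E$ because $\sigma\circ\pi=\pi\circ(A\mapsto JAJ)$ with $J$ the $2\times 2$ permutation matrix, so that $\sigma$ preserves $\pi(R_I(2,2))=\E$; applying the principal case to $\tilde y=(y_2,y_1,y_3)$ and observing that the second and third branches of (\ref{defC}) agree under this swap, so $C(\tilde y)=C(y)$, one finds that $\sigma\circ\tilde\ph$ is exactly (\ref{defph}) for $y$ and inherits all the required properties. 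The only slightly delicate point throughout is the boundary sub-case $|y_1|=1$, which forces $y_3=0$ and makes the relevant branch of (\ref{defC}) a removable $0/0$ to be read as its limiting value $y_1y_2$; here it is quickest to check directly that $\ph(\la)=(y_1\la,\,y_2\la,\,y_1y_2\la^2)$ meets the requirements.
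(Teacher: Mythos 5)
Your proposal is correct and follows essentially the same route as the paper: identify $\ph$ as $\pi\circ F$ with $F=\mathcal{M}_{-Z}\circ H$ from the proof of Theorem \ref{newschw}, compute $F$ explicitly using the lower-triangular form of $1+\la Z^*Y(\xi)$ and the identity $\xi\zeta=\bar y_3C(y)$, and verify that $\det F$ collapses to $\la(C(y)\la+y_3)/(1+\la\bar y_3C(y))$; your stated coefficient checks match the paper's computation. Your explicit handling of the flip case via the automorphism $(x_1,x_2,x_3)\mapsto(x_2,x_1,x_3)$ and of the degenerate sub-case $|y_1|=1$ (where $y_3=0$ is forced and $C(y)$ must be read as $y_1y_2$) is a small but genuine tidying-up of details the paper passes over silently.
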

\begin{proof}
We considered the case $y_1=y_2=0$ in the proof of Theorem \ref{newschw}.  Suppose without loss that $|y_2|\leq |y_1|\neq 0$.  It is immediate that $\ph$ as defined satisfies $\ph(0)=0,\ph'(0)=y$; the task is to show that $\ph$ is analytic and $\ph(\D) \subset\E$.

Choose $\zeta= \sqrt{1-|y_1|}$ and  $Z, \xi, Y=Y(\xi)$ as in equations (\ref{defZ}), (\ref{defxi}),(\ref{defY}) respectively, and let
\begin{eqnarray*} 
F(\la) &=& \mathcal{M}_{-Z}(\la Y) = Z+D_{Z^*}\la Y(1+\la Z^*Y)^{-1}D_Z \nn \\
	&=& Z+ \la (D_{Z^*}YD_Z) (1+\la Z^*D_{Z^*}^{-1}YD_Z)^{-1}.
\end{eqnarray*}
As we observed in the proof of Theorem \ref{newschw}, $F\in\S_{2\times 2}$.
We have
\[
D_{Z^*}YD_Z = \left[\begin{array}{cc} y_1 & \xi|y_1| \\ -y_3/\zeta & y_2 \end{array}\right], \quad
D_{Z^*}^{-1}YD_Z = \left[\begin{array}{cc} \frac{y_1}{|y_1|} & \xi \\ -y_3/\zeta & y_2 \end{array}\right].
\]
Hence
\begin{eqnarray} \label{formF}
F(\la) &=& Z + \la (D_{Z^*}YD_Z)
\left( 1 +\la \left[\begin{array}{cc} 0 & 0  \\ \zeta & 0\end{array}\right]
\left[\begin{array}{cc} \frac{y_1}{|y_1|} & \xi \\ -y_3/\zeta & y_2 \end{array}\right]\right)^{-1} \nn\\
	&=& Z+ \frac{\la}{1+\la\xi\zeta}
\left[\begin{array}{cc} y_1 & \xi|y_1| \\ -y_3/\zeta & y_2 \end{array}\right]\left[\begin{array}{cc} 1+\la\xi\zeta &0\\ -\frac{\la\zeta y_1}{|y_1|} & 1\end{array}\right] \nn\\
	&=&  \left[\begin{array}{cc} 0 & \zeta \\ 0 & 0\end{array}\right] + \frac{\la}{1+\la\xi\zeta} \left[\begin{array}{cc} y_1 & \xi|y_1| \\ w &  y_2  \end{array}\right]
\end{eqnarray}
where
\begin{eqnarray} \label{defw}
w&=&w(\la)=-\frac{y_3}{\zeta}(1+\la\xi\zeta) - \frac{\la\zeta y_1y_2}{|y_1|}
	= -\frac{y_3}{\zeta} -\frac{\la\zeta^3y_1y_2}{|y_1|(1-|y_1|-|y_3|^2)} \nn \\
	&=&-\frac{y_3}{\sqrt{1-|y_1|}} -\la\sqrt{1-|y_1|}C(y).
\end{eqnarray}
We find that
\[
\det F(\la) =\frac{\la^2(y_1y_2-w\xi |y_1|)}{(1+\la\xi\zeta)^2} -
 \frac{\la w\sqrt{1-|y_1|}}{1+\la\xi\zeta}.
\]
Note that 
\[
\xi\zeta = \frac{y_1y_2\bar y_3(1-|y_1|)}{|y_1|(1-|y_1|-|y_3|^2)}=\bar y_3C(y).
\]
We have 
\[
y_1y_2 -w(\la)\xi|y_1|= \frac{y_1y_2(1-|y_1|)}{1-|y_1|-|y_3|^2} +
	\frac{\la \bar y_3}{|y_1|} \left(\frac{y_1y_2(1-|y_1|)}{1-|y_1|-|y_3|^2}\right)^2=|y_1|C(y)(1+\la \bar y_3 C(y)),
\]
 and so
\begin{eqnarray*}
\det F(\la)&=& \frac{\la}{1+\la\bar y_3C(y)}\left(y_3+\la (1-|y_1|)C(y)\right) +
 \left(\frac{\la}{1+\la\bar y_3C(y)}\right)^2|y_1|C(y)(1+\la\bar y_3C(y))\\
	&=&\frac{\la}{1+\la\bar y_3C(y)} (C(y)\la+y_3).
\end{eqnarray*}
Since $F\in\S_{2\times 2}$ the map 
$\pi\circ F$ is analytic, maps $\D$ to $\E$ and satisfies (compare equation (\ref{formF}))
\begin{eqnarray*}
(\pi\circ F)_1(\la)&=& F_{11}(\la) =  \frac{\la y_1}{1+\la\bar y_3C(y)}, \\
(\pi\circ F)_2(\la)&=& F_{22}(\la) =  \frac{\la y_2}{1+\la\bar y_3C(y)}, \\
(\pi\circ F)_3(\la)&=& \det F(\la) = \frac{\la}{1+\la\bar y_3C(y)}(C(y)\la+y_3).
\end{eqnarray*}
Comparison with equation (\ref{defph}) shows that $\pi\circ F= \ph$, and hence $\ph$ has the required properties.
\end{proof}
\begin{remark}
\rm
 In the event that the necessary condition of Theorem \ref{newschw} holds with equality, that is,
\[
\max\{|y_1|,|y_2|\} + |y_3| =1,
\]
the function $\ph$ of Theorem \ref{formphi} is a complex geodesic of $\E$ (that is, it has an analytic left inverse).  Suppose that $|y_2| \leq |y_1| \neq 0$ and $|y_1| + |y_3| =1$.
Choose $\omega_1, \omega_3 \in\T$ such that $\omega_1y_1=|y_1|,\omega_3y_3=|y_3|$; then $\omega_1y_1+\omega_3y_3=1$.  For any $z\in\Delta$ the rational function $\Psi(z,.)$ given by equation (\ref{defPsi}) maps $\E$ analytically into $\D$.  We have
\[
\Psi(\omega, \ph(\la))=\frac{\ph_3(\la)\omega - \ph_1(\la)}{\ph_2(\la)\omega -1}
	=\la\frac{(C\la+y_3)\omega - y_1}{\la y_2\omega -(1+C\bar y_3\la)},
\]
where (since $1-|y_1|=|y_3|$)
\[
C=C(y)= \frac{y_1y_2|y_3|}{|y_1|(|y_3|-|y_3|^2)}=\frac{y_1y_2}{|y_1|^2}
	= \frac{y_2}{\bar y_1}.
\]
Choose $\omega= -\bar\omega_1\omega_3$.  A little calculation gives the relation
\[
\Psi(\omega, \ph(\la)) = \bar\omega_1\la.
\]
Hence $\omega_1\Psi(\omega,.):\E\to\D$ is an analytic left inverse of $\ph$, and so $\ph$ is a complex geodesic of $\E$.

One might expect (by analogy with the case of the unit disc) that in the extremal case $\ph$ should be $\E$-inner \cite{alaa}, that is, the radial limit function of $\ph$  should map $\T$ almost everywhere into the distinguished boundary $b\E$ of $\E$.    In fact $b\E$ is the intersection of the closure $\bar\E$ of $\E$ with the set $\{x\in\C^3:|x_3|=1\}$ \cite[Theorem 7.1]{awy}, and so an analytic map $\ph:\D\to\E$ is $\E$-inner if and only if $\ph_3$ is a scalar inner function.
For the function $\ph$ of the theorem, $\ph_3$ is inner if and only if  $y$ is ``doubly extremal'', that is, $|y_1|=|y_2|=1-|y_3|$.
\end{remark}

\section{The tetrablock is inhomogeneous} \label{neous}
 We shall show that the inhomogeneity of $\E$  follows from Theorem \ref{newschw} and E. Cartan's classification of bounded homogeneous domains \cite[page 313]{fuks}.  We use L. A. Harris' theory of $J^*$-algebras\cite{harris}.   A $J^*$-algebra is a closed subspace $\mathcal{A}$ of the Banach space $\mathcal{L}(H,K)$, for some Hilbert spaces $H,K$, with the property that $T\in \mathcal{A}$ implies $TT^*T \in \mathcal{A}$.  The importance of such algebras here is that, in dimensions up to 15, every bounded symmetric homogeneous domain is isomorphic to the open unit ball of a $J^*$-algebra.  A domain $\Omega$ is said to be {\em symmetric} if, for every $z\in\Omega$, there is an analytic involution of $\Omega$ of which $z$ is an isolated fixed point.

A domain $\Omega_1$ is said to be an {\em analytic retract} of a domain $\Omega_2$ if there exist analytic maps $h:\Omega_1\to\Omega_2, \kappa:\Omega_2\to\Omega_1$ such that $\kappa\circ h=\mathrm{id}_\E$.  We define the {\em indicatrix} $I(\Omega,a)$ of a domain $\Omega$ at a point $a\in\Omega$ to be the set
\[
I(\Omega,a)=\{\ph'(0): \ph \mbox{ is an analytic map from }\D \mbox{ to } \Omega, \ph(0)=a\}.
\]
It follows from the chain rule that if $h:\Omega_1\subset\C^n\to\Omega_2$ is analytic and $a\in\Omega_1$ then $h'(a)I(\Omega_1,a) \subset I(\Omega_2,h(a))$.  If, further, $h$ has an analytic left inverse $\kappa$, then $\kappa'\circ h(a)h'(a)$ is the identity operator
on $\C^n$, and so $\kappa'\circ h(a)$ is a linear operator that maps $I(\Omega_2, h(a))$ surjectively onto $I(\Omega_1,a)$.

We recall \cite{harris2} that the {\em rank} of  a $J^*$-algebra $\mathcal{A}$ is the supremum of the number of non-zero elements in the spectrum of $T^*T$ over all $T\in \mathcal{A}$; it is also equal to the maximum cardinality of any set of mutually orthogonal non-zero minimal partial isometries in $\mathcal{A}$ \cite [Corollary 5]{harris2}.  Every finite-dimensional $J^*$-algebra clearly has finite rank.
\begin{theorem}  \label{noretract}
$\E$ is not an analytic retract of the open unit ball of any  $J^*$-algebra of finite rank.
\end{theorem}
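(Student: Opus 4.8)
The plan is to argue by contradiction using the indicatrix at the origin together with the rank of a $J^*$-algebra. Suppose $\E$ were an analytic retract of $B$, the open unit ball of a $J^*$-algebra $\mathcal{A}$ of finite rank $r$. Pick the retraction data $h:\E\to B$, $\kappa:B\to\E$ with $\kappa\circ h=\mathrm{id}_\E$, and set $a=h(O)\in B$. By the remark following the definition of analytic retract, the linear map $\kappa'(a):\C^{\dim\mathcal{A}}\to\C^3$ sends $I(B,a)$ \emph{onto} $I(\E,O)$. So the first step is to identify these two indicatrices. Theorem \ref{newschw} gives us exactly the second: $I(\E,O)=\{y\in\C^3:\max\{|y_1|,|y_2|\}+|y_3|\le 1\}$ (one inclusion is the theorem's necessity direction, the other its sufficiency direction, after a routine scaling to reconcile the ``$\le$'' with an open-disc construction). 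The key structural feature I want to extract from this set is that it is \emph{not balanced in a strong sense}: while it is circular (closed under $y\mapsto\zeta y$, $|\zeta|=1$) and star-shaped about $0$, its boundary contains genuine line segments — indeed through any boundary point $y$ with, say, $|y_1|+|y_3|=1$ and $|y_2|<|y_1|$, one can move $y_2$ freely within a disc while staying on the boundary. Equivalently, the indicatrix of $\E$ at $O$ is the closed unit ball of a norm on $\C^3$ whose unit ball is not strictly convex and, more to the point, is not the ball of any inner-product norm.

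The second step is to compute $I(B,a)$ for $B$ the unit ball of a $J^*$-algebra. Since $B$ is homogeneous (each Möbius-type automorphism $\mathcal{M}_Z$ of Harris moves $a$ to $0$), $I(B,a)$ is the linear image under the derivative of such an automorphism of $I(B,0)$; and $I(B,0)$ is simply the closed unit ball of $\mathcal{A}$ itself (take $\ph(\la)=\la T$). Thus $I(B,a)=L(\text{unit ball of }\mathcal{A})$ for an invertible linear $L$, i.e. $I(B,a)$ is the unit ball of the norm $T\mapsto\|L^{-1}T\|$ on $\mathcal{A}$. Composing with $\kappa'(a)$, we conclude that $I(\E,O)$ is a linear image of the unit ball of a $J^*$-algebra. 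Here is where the rank enters: because $\mathcal{A}$ has rank $r<\infty$ and $\E$ is $3$-dimensional while $I(\E,O)$ is \emph{full-dimensional} (it has nonempty interior — it contains a neighborhood of $0$), the surjection $\kappa'(a)$ from $\mathcal{A}$ onto $\C^3$ must restrict, after passing to an appropriate $3$-dimensional subalgebra or using the minimal-partial-isometry description of rank, to something that forces $I(\E,O)$ to inherit a convexity/symmetry property incompatible with the flat pieces in its boundary. Concretely: the unit ball of a $J^*$-algebra of rank $1$ is a Hilbert ball (an ellipsoid), which is strictly convex and whose linear images are strictly convex — contradicting the segments in $\partial I(\E,O)$; for higher rank one shows that a $3$-dimensional linear image of the $J^*$-algebra ball that has interior must split off a rank-$\le 1$ direction in a way that again produces strict convexity along the relevant faces, or else has dimension mismatch. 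The cleanest route is probably to show directly that every $3$-dimensional linear image of the unit ball of \emph{any} $J^*$-algebra of finite rank is linearly equivalent to one of a short list of ``elementary'' $3$-dimensional $JB^*$-ball shapes, none of which matches the indicatrix of $\E$.

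The main obstacle I anticipate is the last point: controlling which $3$-dimensional convex bodies arise as linear images of $J^*$-algebra balls of arbitrary (but finite) rank. Unlike the rank-$1$ case — which is immediate, since the ball is an ellipsoid and ellipsoids stay ellipsoids under linear maps, instantly contradicting the non-ellipsoidal, non-strictly-convex shape of $I(\E,O)$ — the higher-rank cases require knowing enough about the facial structure of finite-dimensional $JB^*$/Cartan-factor balls. I expect one reduces to the finitely many Cartan factors via the classification (each finite-dimensional $J^*$-algebra of finite rank is a direct sum of Cartan factors, and its ball is the corresponding bounded symmetric domain), checks that a $3$-dimensional domain obtained as an analytic retract must itself be one of Cartan's bounded homogeneous domains in $\C^3$ or at least have an indicatrix of one of those types, and then verifies by the explicit indicatrix formula of Theorem \ref{newschw} that $\E$'s indicatrix is on nobody's list — its unit ball has the ``$\max$ plus modulus'' shape, which is neither the Euclidean ball (Cartan factor of type $I_{1,n}$, $II$, or the ball) nor the polydisc-type product nor the Lie ball in dimension $3$. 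So in outline: (1) write down $I(\E,O)$ from Theorem \ref{newschw} and note it is not an ellipsoid and not strictly convex; (2) realize $I(\E,O)$ as a linear surjective image of the unit ball of $\mathcal{A}$ via the retraction derivative; (3) invoke finite rank (and, if needed, Cartan's classification) to constrain those images, and derive the contradiction from the shape in (1). The real work is step (3); steps (1) and (2) are short and rest entirely on the already-proved Schwarz lemma and Harris' homogeneity of $J^*$-balls.
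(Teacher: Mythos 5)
There is a genuine gap, and it sits exactly where you say the ``real work'' is. Your steps (1) and (2) agree with the paper: $I(\E,O)$ is the unit ball of $\|y\|_\E=\max\{|y_1|,|y_2|\}+|y_3|$ by Theorem \ref{newschw}, and $I(B,0)$ is the closed unit ball of $\mathcal{A}$ (the paper first composes with an automorphism of $B$ so that $h(O)=0$, which spares you the extra linear map $L$). But your step (3) is only a programme, and the proposed routes for it would not go through. Strict convexity cannot be the obstruction: already for rank $2$ the $J^*$-ball can be a bidisc, which is not strictly convex and has flat boundary faces, so ``flat pieces in $\partial I(\E,O)$'' distinguish nothing. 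And there is no ``short list'' of $3$-dimensional linear images of finite-rank $J^*$-balls: linear images of the polydisc $\bar\D^N$ (the ball of the rank-$N$ algebra of diagonal matrices) alone already form an enormous family of convex bodies as $N$ ranges over all finite values, since ``finite rank'' does not bound the rank. Working with the surjection $\kappa'(a):I(B,a)\to I(\E,O)$, i.e.\ with quotients of $J^*$-balls, is the hard direction.

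The paper argues in the dual direction, which is where the finite-rank hypothesis actually bites. Since $h'(0)$ and $\kappa'(0)$ are contractions for the indicatrix norms and $\kappa'(0)h'(0)=\mathrm{id}_{\C^3}$, the map $h'(0)$ is an \emph{isometry}; setting $A=h'(0)(1,0,0)$ and $B=h'(0)(0,0,1)$ one gets $\|\la A+\mu B\|=|\la|+|\mu|$ for all $\la,\mu$, i.e.\ an isometric copy of two-dimensional $\ell^1$ over $\C$ inside $\mathcal{A}$. The contradiction then comes from Harris' singular value decomposition: for each of infinitely many distinct $\omega_j\in\T$ one has $\|A+\omega_j B\|=2$, and a peeling argument shows each such $\omega_j$ forces at least one new member of a family of mutually orthogonal nonzero minimal partial isometries in $\mathcal{A}$; after at most $r$ steps one runs out, and choosing a fresh $\omega$ yields $\|A+\omega B\|<2$, a contradiction. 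If you want to complete your proof, replace your step (3) by this isometric-embedding argument (or prove directly, e.g.\ by the averaging identity $\int_\T\|A+\omega B\|\,d\omega$ applied to the singular value structure, that a rank-$r$ $J^*$-algebra contains no isometric copy of $\ell^1_2(\C)$); the quotient/classification route as you describe it does not close.
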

\begin{proof}
Let $\mathcal{A}\subset\mathcal{L}(H,K)$ be a $J^*$-algebra of rank $r < \infty$, $B$ its open unit ball.  Suppose that $h:\E\to B, \kappa: B\to\E$ are analytic and $\kappa\circ h = \mathrm{id}_\E$.  Since the open unit ball of a $J^*$-algebra is homogeneous, we may replace $h, \kappa$ by their compositions with automorphisms of $B$ to ensure that $h(0)=0$.
By Theorem \ref{newschw},
\[
 I(\E,0)=\{y\in\C^3: \max\{|y_1|,|y_2|\}+|y_3| \leq 1\}.
\]
 It is easy to see that $I(B,0)$ is the closed unit ball $\bar B$ of $\mathcal{A}$.  Indeed, if $T\in\bar B$ then the function $\ph(\la)=\la T$ maps $\D$ to $B$ and $0$ to $0$ and satisfies $\ph'(0)=T$, so that $I(B,0) \supset \bar B$, while if $\ph:\D\to B$ is analytic and maps $0$ to $0$ then the Schwarz lemma for $\D$ applied to the scalar functions $\left<\ph(.)\xi,\eta\right>, \xi\in H, \eta\in K$ shows that $\ph'(0) \in \bar B$.

$I(\E,0)$ is the closed unit ball of $\C^3$ with respect to the norm 
\[
||y||_\E = \max\{|y_1|,|y_2|\}+|y_3|.
\]
Since the linear operators $h'(0):\C^3\to\mathcal{A}, \kappa'(0):\mathcal{A} \to \C^3$ are contractions with respect to $||.||_\E, ||.||_\mathcal{A}$ and  $\kappa'(0)h'(0)$ is the identity operator on $\C^3$, it follows that $h'(0)$ is an isometry.  Let $h'(0)(1,0,0) = A, h'(0)(0,0,1)=B$.  Then for any $\la,\mu\in\C$,
\begin{equation} \label{isom}
 ||\la A+\mu B||_{\mathcal{L}(H,K)} = ||h'(0)(\la,0,\mu)|| =||(\la,0,\mu)||_\E = |\la| + |\mu|;
\end{equation}
in particular, $||A||=||B||=1$.
By \cite[Proposition 4]{harris2}, every element $T\in \mathcal{A}$ has a singular value decomposition 
\[
T=\sum_{k=1}^m s_k V_k
\]
 where  $m \leq r$, each $s_k> 0$ 
  and the $V_k\in \mathcal{A}$ are mutually orthogonal non-zero minimal partial isometries.   Let $\omega_1,\omega_2,\dots$ be an infinite sequence of distinct points in $\T$.  Since $||A+\omega_1B||= 2$ we can write down the singular value decomposition
\[
A+\omega_1 B = 2V_1+\dots+2V_{n_1} + R_1
\]
where $1\leq n_1\leq r, R_1 \in \mathcal{A}, ||R_1||<1$ and $R_1$ is orthogonal to $V_1,\dots, V_{n_1}$.  The space of maximising vectors of $A+\omega_1B$ is
\[
M_1= \mathrm{span}\{V_1^*K,\dots,V_{n_1}^*K\}.
\]
For $x\in M_1$ we have 
\[
 2||x|| = ||Ax+\omega_1 Bx|| \leq ||Ax|| + ||Bx|| \leq 2||x||.
\]
It follows that $||Ax||=||x||=||Bx||$.  Moreover, the parallelogram law shows that $Ax=\omega_1Bx$.  Hence, for $x\in M_1$,
\[
 2Ax = (A+\omega_1B)x= 2V_1x+\dots+2V_{n_1}x.
\]
Thus we can write
\begin{eqnarray}
 A&=&V_1+\dots+V_{n_1} + A_1 \\
 \omega_1B&=&V_1+\dots+V_{n_1} + \omega_1B_1
\end{eqnarray}
where $A_1,B_1 \in \mathcal{A}$, $A_1$ and $B_1$ are both orthogonal to $V_1,\dots,V_{n_1}$ and $||A_1+\omega_1 B_1|| < 2$.  For any $\omega\in\T, \omega \neq \omega_1,$  $||A+\omega B||=2$  and
\[
A+\omega B = (1+\omega\bar\omega_1)(V_1+\dots+V_{n_1}) + A_1+\omega B_1.
\]
Since $\omega\neq\omega_1$ we have $|1+\omega\bar\omega_1| < 2$.  Hence $||A_1+\omega B_1|| = 2$ for any $\omega\in \T\setminus \{\omega_1\}$ and by the same arguments we have
\begin{eqnarray}
 A_1 &=& V_{n_1+1}+\dots+V_{n_2} +A_2,\nn\\
\omega_2B_1&=& V_{n_1+1}+\dots+V_{n_2} +\omega_2B_2
\end{eqnarray}
where $A_2,B_2 \in \mathcal{A}$, $A_2$ and $B_2$ are both orthogonal to $V_1,\dots,V_{n_2}$ and $||A_2+\omega_j B_2|| < 2, j=1,2$.
This process terminates after at most $r$ steps.  If we write $W_1= V_1+\dots+V_{n_1}$ etc. then, for some $N \leq r$,
\begin{eqnarray*}
 A &=& W_1+W_2+\dots+W_N,\\
 B &=& \bar\omega_1W_1 +\bar\omega_2W_2+\dots+\bar\omega_NW_N
\end{eqnarray*}
where $W_1,\dots,W_N$ are mutually orthogonal non-zero partial isometries in $\mathcal{A}$.  Choose $\omega\in\T$ different from $\omega_1,\dots,\omega_N$: then
\[
||A+\omega B|| = ||(1+\omega\bar\omega_1)W_1+\dots+(1+\omega\bar\omega_N)W_N|| = \max_{1\leq j \leq N} |1+\omega\bar\omega_j| < 2,
\]
 contrary to  equation (\ref{isom}).  Hence the postulated maps $h, \kappa$ do not exist.
\end{proof}

 \begin{corollary}
$\E$ is inhomogeneous.
\end{corollary}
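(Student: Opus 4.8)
The plan is to read off the corollary from Theorem~\ref{noretract}, feeding it through the structure theory already cited in this section: a homogeneous bounded domain in $\C^3$ must, by E.~Cartan's classification, be symmetric; a bounded symmetric domain of complex dimension less than $16$ is analytically isomorphic to the open unit ball of a $J^*$-algebra of finite rank; and an analytic isomorphism onto such a ball is, trivially, an analytic retraction. Theorem~\ref{noretract} then supplies the contradiction.

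First I would record that $\E$ is bounded, since Cartan's classification applies only to \emph{bounded} homogeneous domains. This is immediate from characterisation (iv) of Theorem~\ref{recap}: $\E=\pi(R_{II}(2))$ is the image under the polynomial map $\pi$ of the open unit ball of the space of symmetric $2\times2$ matrices, which is bounded; indeed $\|A\|<1$ forces $|a_{11}|,|a_{22}|<1$ and $|\det A|<1$, so $\E\subset\D^3$.

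Next, suppose for a contradiction that $\E$ is homogeneous. Then $\E$ is a bounded homogeneous domain in $\C^3$, so by E.~Cartan's classification of bounded homogeneous domains in complex dimensions two and three \cite[page~313]{fuks} it is symmetric; thus $\E$ is a bounded symmetric domain. Since $\dim_{\C}\E=3<16$, $\E$ cannot be isomorphic to either of the two exceptional bounded symmetric domains (of dimensions $16$ and $27$), so by Harris' realisation of the remaining bounded symmetric domains there is an analytic isomorphism $h:\E\to B$ onto the open unit ball $B$ of a $J^*$-algebra $\mathcal{A}$; being finite-dimensional, $\mathcal{A}$ has finite rank. Taking $\kappa=h^{-1}:B\to\E$ gives analytic maps with $\kappa\circ h=\mathrm{id}_\E$, so $\E$ is an analytic retract of the open unit ball of a $J^*$-algebra of finite rank, contradicting Theorem~\ref{noretract}.

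I do not expect a genuine obstacle here: all the analytic content lies in Theorems~\ref{newschw} and~\ref{noretract}, and what remains is bookkeeping — quoting Cartan's classification and Harris' realisation theorem in exactly the forms needed, and verifying the numerical fact that $\dim_{\C}\E=3$ falls below the dimension $16$ at which the exceptional domains first occur. The one point to state carefully is that "homogeneous" for $\E$ (transitivity of $\Aut\E$) together with boundedness is precisely the hypothesis of Cartan's theorem, so no extra regularity or completeness assumption is being smuggled in.
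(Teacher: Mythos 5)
Your proposal is correct and follows essentially the same route as the paper: boundedness of $\E$, Cartan's classification to get symmetry, Harris' realisation as the unit ball of a finite-rank $J^*$-algebra, and then the contradiction with Theorem~\ref{noretract} via the trivial retraction $h^{-1}\circ h=\mathrm{id}_\E$. The only difference is that you spell out the boundedness of $\E$ and the dimension count explicitly, which the paper leaves implicit.
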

\begin{proof}
E. Cartan showed that every bounded homogeneous domain in	 $\C^3$ is symmetric \cite[page 313]{fuks}.  Every bounded symmetric homogeneous domain in $\C^n, n< 15$, is the open unit ball of a $J^*$-algebra \cite[Theorem 7]{harris}.  $\E$ is bounded, and so if $\E$ is homogeneous then $\E$ is isomorphic to the open unit ball of a $3$-dimensional $J^*$-algebra, contrary to Theorem \ref{noretract}.  Hence $\E$ is inhomogeneous.
\end{proof}

\section{The automorphism group of the tetrablock} \label{aut}
Although the automorphism group $\Aut\E$ does not act transitively on $\E$, it is nevertheless quite large: there are commuting left and right
actions of $\Aut\D$ on $\E$ \cite[Theorem 6.8]{awy}.  These two actions together with the ``flip'' automorphism $F:(x_1,x_2,x_3)\mapsto(x_2,x_1,x_3)$ give a group $G$ of automorphisms of $\E$, and we conjectured in \cite{awy} that in fact $G=\Aut\E$.
In this section we prove that the conjecture is correct.

Roughly speaking, the actions of $\Aut\D$ on $\E$ are by composition.  Consider $x\in\bar\E$ and $y\in\E$.  The linear fractional maps $\Psi(.,x), \Psi(.,y)$ given by equation (\ref{defPsi}) map $\Delta$ into $\Delta, \D$ respectively \cite[Theorems 2.4 and 2.7]{awy}, and a simple calculation yields the relation
\[
\Psi(.,x)\circ \Psi(.,y) = \Psi(.,x\dia y)
\]
where
\begin{eqnarray} \label{defdia}
x\dia y &=& \frac{1}{1-x_2y_1} (x_1-x_3y_1,y_2-x_2y_3,x_1y_2-x_3y_3)  \\
	& = & \left(\Psi(y_1,x), \Psi(x_2,F(y)), \frac{x_1y_2-x_3y_3}{1-x_2y_1}\right). \nn
\end{eqnarray}
We define $x\dia y$ by equation (\ref{defdia}) for any $x,y\in\C^3$ such that $x_2y_1 \neq 1$.  Consider $\up \in\Aut\D$: we can write $\up = \Psi(.,\tau(\up))$ for some $\tau(\up)\in\bar\E$.  The left action of $\Aut\D$ on $\E$ or $\bar\E$ is given by $\up \cdot x= \tau(\up)\dia x$, or equivalently $\Psi(.,\up\cdot x) = \up \circ \Psi(.,x)$.  Similarly one defines a right action by $x\cdot\up = x\dia\tau(\up), x\in\E, \up\in\Aut\D$.  
Fuller details of the construction are given in \cite[Section 6]{awy}.  If $L_\up, R_\chi$ for any $\up, \chi\in\Aut\D$ are given by $L_\up(x) = \up\cdot x, R_\chi(x) = x\cdot \chi$ then $L_\up, R_\chi$ are commuting elements of $\Aut\E$.  Moreover, $L_\up L_\chi= L_{\up\circ\chi}$ and there is an involution $\up\mapsto \up_*$ on $\Aut\D$ such that $FL_\up= R_{\up_*}$ for any $\up\in\Aut\D$.  It follows that
\begin{equation} \label{defG}
G\stackrel{\rm def}{=}  \{L_\up R_\chi F^\nu: \up,\chi\in\Aut\D, \nu= 0 \mbox{ or }1 \}
\end{equation}
is a subgroup of $\Aut\E$.
\begin{theorem} \label{autE}
\[
 \Aut \E = G.
\]
\end{theorem}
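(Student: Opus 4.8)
The inclusion $G\subseteq\Aut\E$ being already established, it remains to show that every $f\in\Aut\E$ lies in $G$, and the plan is to compose $f$ with elements of $G$ until it becomes the identity. The first step is to arrange that $f$ fixes the origin $O$. For this I claim that $\Aut\E\cdot O$ is contained in the set $T=\{(x_1,x_2,x_1x_2):|x_1|,|x_2|<1\}$ of triangular points of $\E$. Granting the claim, a direct computation with (\ref{defdia}) shows that $G$ acts transitively on $T$ — one has $L_\upsilon R_\chi(O)=((\tau\upsilon)_1,(\tau\chi)_2,(\tau\upsilon)_1(\tau\chi)_2)$, and $(\tau\upsilon)_1$ ranges over all of $\D$ as $\upsilon$ runs through $\Aut\D$ — so I may replace $f$ by $g\circ f$ for a suitable $g\in G$ and assume $f(O)=O$. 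To prove the claim: by the chain rule $f'(O)$ carries the indicatrix $I(\E,O)$ linearly and bijectively onto $I(\E,f(O))$, and by Theorem \ref{newschw}, $I(\E,O)$ is the unit ball $B_\E$ of the norm $\|y\|_\E=\max\{|y_1|,|y_2|\}+|y_3|$. Using the known structure of the $G$-orbits — every orbit meets the segment $\{(0,0,s):0\le s<1\}$, those with $s>0$ consisting of non-triangular points — it suffices to show that $I(\E,(0,0,s))$ is \emph{not} linearly isomorphic to $B_\E$ when $0<s<1$. One would compute this indicatrix (bounding it above by the Carath\'eodory--Reiffen indicatrix formed from the functions $x\mapsto x_j$ and $x\mapsto\Psi(z,x)$, and below by explicit analytic discs into $\E$) and read off a qualitative linear invariant that $B_\E$ has but $I(\E,(0,0,s))$ lacks — for instance, $B_\E$ contains the $2$-dimensional polydisc $\{(y_1,y_2,0):\max\{|y_1|,|y_2|\}<1\}$, whereas the coupling introduced by the $\Psi$-functions makes $I(\E,(0,0,s))$ strictly convex enough to contain no $2$-dimensional polydisc. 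This comparison of indicatrices is, I expect, the main obstacle; everything after the reduction $f(O)=O$ is essentially mechanical.

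Once $f(O)=O$, I would exploit the fact that $\E$ is a bounded \emph{quasi-circular} domain of weights $(1,1,2)$: the map $(x_1,x_2,x_3)\mapsto(e^{it}x_1,e^{it}x_2,e^{2it}x_3)$ is an automorphism of $\E$ for every real $t$ — indeed, from Theorem \ref{recap}(ii) one checks that $\diag(\theta_1,\theta_2,\theta_3)\in\Aut\E$ precisely when $\theta_3=\theta_1\theta_2$. By the classical rigidity theorem for biholomorphisms of bounded quasi-circular domains, an automorphism fixing the centre is a quasi-homogeneous polynomial map; with these weights this means that $f_1$ and $f_2$ are linear in $x_1,x_2$ and that
\[
f_3(x)=p_{11}x_1^2+p_{12}x_1x_2+p_{22}x_2^2+q\,x_3 .
\]
Moreover $f'(O)$ preserves $B_\E$, so its upper-left $2\times2$ block preserves the bidisc norm on $\C^2$ and is therefore a monomial matrix $\diag(\theta_1,\theta_2)$ (or the corresponding antidiagonal one), while $|q|=1$. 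I treat the diagonal case; the antidiagonal case merely produces $f$ composed with the flip.

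It then remains to eliminate $p_{11},p_{12},p_{22}$ and to match $q$. Evaluating $f$ on the complex geodesic $\{(x_1,0,0):x_1\in\D\}$ gives $(\theta_1x_1,0,p_{11}x_1^2)\in\E$ for all $x_1$, whence, letting $|x_1|\to1$ in Theorem \ref{recap}(ii), $p_{11}=0$; symmetrically $p_{22}=0$. Evaluating $f$ on the triangular points $(x_1,x_2,x_1x_2)$ with $|x_1|,|x_2|\to1$ forces $p_{12}+q=\theta_1\theta_2$. Hence, writing $u=x_3-x_1x_2$ (which vanishes exactly on $T$, and in terms of which $\E$ is the set where $|x_1(1-|x_2|^2)-\bar x_2 u|+|u|<1-|x_2|^2$), the map $f$ acts — after composition with $\diag(\theta_1,\theta_2,\theta_1\theta_2)\in G$ — as $(x_1,x_2,u)\mapsto(x_1,x_2,\mu u)$ with $|\mu|=1$. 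A one-line computation (take $x_2$ real and $x_1,u$ positive and aligned so that the first term nearly cancels, then rotate $u$ by $\mu$) shows that $u\mapsto\mu u$ fails to preserve $\E$ unless $\mu=1$. Therefore $\mu=1$, so $f=\diag(\theta_1,\theta_2,\theta_1\theta_2)$ (or this composed with the flip), which lies in $G$, and the proof is complete.
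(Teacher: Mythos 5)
Your overall plan --- first move $f(O)$ back to $O$, then apply a rigidity theorem at the origin --- runs parallel to the paper's, but each of your two key steps has a genuine gap.

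The first gap is the claim $\Aut\E\cdot O\subseteq\mathcal{T}$, which you reduce to showing that $I(\E,(0,0,s))$ is not linearly isomorphic to $I(\E,O)$ for $0<s<1$ and then do not prove: you neither compute $I(\E,(0,0,s))$ nor verify the proposed distinguishing invariant (absence of a $2$-dimensional polydisc), and you yourself flag this as ``the main obstacle.'' That is exactly where the content of the theorem lives, and it is not clear your invariant works. The paper gets this step from two earlier results rather than from indicatrices at points other than $O$: the inhomogeneity of $\E$ (the corollary to Theorem \ref{noretract}, itself resting on Theorem \ref{newschw} and $J^*$-algebra theory), combined with Kaup's theorem that the $\Aut\E$-orbit $V$ of $O$ is a closed connected complex submanifold of $\E$. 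Since $V\supseteq\mathcal{T}$ and $V$ cannot be $3$-dimensional (an open and closed orbit in the connected $\E$ would force homogeneity), $V$ is $2$-dimensional; as $\mathcal{T}$ is closed in $\E$ it is open and closed in $V$, so $V=\mathcal{T}$. If you insist on the indicatrix route you must actually carry out that comparison.

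The second gap is the appeal to a ``classical rigidity theorem for bounded quasi-circular domains'' asserting that an origin-fixing automorphism is a quasi-homogeneous polynomial map. No such theorem holds in that generality. For example, $D=\{(z,w)\in\C^2:|z|^2+|w-z^2|^2<1\}$ is bounded and quasi-circular with weights $(1,2)$, yet $f(z,w)=(w-z^2,\;z+(w-z^2)^2)$ is an automorphism of $D$ fixing the origin whose first component has quasi-weight $2$ rather than $1$; $f$ does not commute with the weighted circle action and is not quasi-homogeneous. (Even the weaker assertion that such automorphisms are polynomial is delicate and not ``classical.'') Since the specific form you need for $f_1,f_2,f_3$ rests entirely on quasi-homogeneity, this step collapses as stated. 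The paper proves the needed rigidity by hand and only under the stronger hypothesis that the automorphism fixes $\mathcal{T}$ pointwise --- a hypothesis it first arranges by classifying the induced origin-fixing automorphism of $\mathcal{T}\cong\D^2$ and absorbing it into $G$ (this is also where the flip enters). In Lemma \ref{fixes} the maps $h^{-1}\circ L_{\rho_{1/\omega}}\circ h\circ L_{\rho_\omega}$ and their iterates, together with compactness of the isotropy group at $O$ and Cartan's uniqueness theorem, force $h$ to commute with the torus action, giving $h(x)=(x_1,x_2,(1-c)x_1x_2+cx_3)$, and a Shilov-boundary computation yields $c=1$. Your final elimination of $\mu$ via $u=x_3-x_1x_2$ is essentially the same end-game as that last computation, but you arrive at the polynomial normal form by an unjustified citation rather than a proof.
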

The proof is based on the ideas of  M. Jarnicki and P. Pflug in their  determination of the automorphism group of the symmetrised bidisc \cite{JP};  the author and J. Agler had previously found a more elementary but longer proof of the same result.   
 An important role in the proof is played by the rotations $\rho_\omega\in\Aut\D$, defined by $\rho_\omega(z)=\omega z.$  It is easy to show that, for any $\omega\in\T$ and $x\in\E$,
\[
\rho_\omega\cdot x =(\omega x_1, x_2,\omega x_3), \quad x\cdot \rho_\omega = (x_1,\omega x_2,\omega x_3).
\]
\begin{lemma} \label{fixes}
Any automorphism of $\E$ that fixes every triangular point of $\E$ is the identity automorphism of $\E$.
\end{lemma}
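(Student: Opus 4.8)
The plan is to exploit the structure of triangular points. Recall that a triangular point is an $x\in\E$ with $x_1x_2=x_3$; by Theorem~\ref{recap}(iii) such a point lies in $\E$ precisely when $\max\{|x_1|,|x_2|\}<1$ (the condition $\sup_z|\Psi(z,x)|<1$ degenerates to $|x_1|<1$, and we also need $|x_2|<1$). Thus the set of triangular points of $\E$ is naturally parametrised by the bidisc $\D^2$ via $(x_1,x_2)\mapsto(x_1,x_2,x_1x_2)$. Let $f\in\Aut\E$ fix every triangular point. First I would observe that $f$ then fixes, in particular, the origin $O$ and every point of the two coordinate discs $\{(x_1,0,0):|x_1|<1\}$ and $\{(0,x_2,0):|x_2|<1\}$, and more generally the whole two-real-dimensional-complex surface of triangular points.

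Next I would pass to the derivative. Since $f$ fixes a $2$-complex-dimensional analytic subvariety $V=\{(\zeta,\eta,\zeta\eta):\zeta,\eta\in\D\}$ of the $3$-dimensional domain $\E$ pointwise, the differential $f'(x)$ fixes the tangent space $T_xV$ pointwise for every $x\in V$. At a point $x=(\zeta,\eta,\zeta\eta)$ this tangent space is spanned by $(1,0,\eta)$ and $(0,1,\zeta)$. So $f'(x)$, a $3\times3$ matrix, acts as the identity on this $2$-plane; its only remaining freedom is how it acts on a complementary direction. Writing $f'(x)$ in a basis adapted to $T_xV$, it has the block form $\left[\begin{smallmatrix}I_2 & * \\ 0 & c(x)\end{smallmatrix}\right]$ for some scalar-valued analytic function $c$ and some analytic ``$*$'' column; composing with a suitable transvection fixing $V$ will not immediately kill these, so the real content is to show $f'(x)$ is actually the identity on all of $\C^3$ at every $x\in V$, and then to upgrade ``$f$ and $f'$ both fixed along $V$'' to ``$f=\mathrm{id}$'' globally.

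For the global upgrade I would use the standard Cartan-type uniqueness argument: if $g\in\Aut\Omega$ fixes a point $p$ and $g'(p)=I$, then $g=\mathrm{id}_\Omega$ on any bounded domain $\Omega$ (iterate $g$ and examine Taylor coefficients at $p$). $\E$ is bounded, so it suffices to produce a single triangular point $p$ — say $p=O$ — at which $f'(O)=I$. Thus the whole lemma reduces to the linear-algebra fact that $f'(O)$ is the identity. We already know $f'(O)$ fixes the plane spanned by $e_1=(1,0,0)$ and $e_2=(0,1,0)$ (tangent to $V$ at $O$), so I only need to control its action on the $x_3$-direction: I must show $f'(O)e_3$ has no $e_3$-component other than $1$ and no $e_1,e_2$-components. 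To get this I would differentiate the identity $f(\zeta,\eta,\zeta\eta)=(\zeta,\eta,\zeta\eta)$ twice — the mixed second derivative in $\zeta$ and $\eta$ of the third component forces a relation involving $\partial f_3/\partial x_3(O)$ and thereby pins down $c(O)=1$; the off-diagonal entries are killed because $f'(O)$ must preserve $I(\E,O)$, the unit ball of $\|\cdot\|_\E=\max\{|y_1|,|y_2|\}+|y_3|$ (Theorem~\ref{newschw}), and an invertible linear map fixing the $e_1,e_2$-plane pointwise and preserving this norm ball must be unipotent upper-triangular, while the second-derivative computation forces even the upper-triangular part to vanish.

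The main obstacle I anticipate is precisely this last linear-algebra step: showing that the ``$*$'' column in $f'(O)$ vanishes. Fixing a $2$-plane pointwise and preserving a convex body is not by itself enough to force the transverse action to be trivial — one genuinely needs the extra rigidity coming from $f$ fixing the \emph{curved} surface $V$ (not just its tangent plane at $O$), which is what the second-order Taylor expansion of the constraint $f_3(\zeta,\eta,\zeta\eta)=\zeta\eta$ supplies. Getting that bookkeeping right, and confirming that no nontrivial unipotent map survives, is where the real work lies; everything else is the routine Cartan iteration.
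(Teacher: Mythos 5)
Your reduction to ``show $f'(O)=I$ and invoke Cartan's uniqueness theorem'' is sound as a reduction, and your indicatrix argument can indeed be made to kill the off-diagonal entries $a=\partial f_1/\partial x_3(O)$ and $b=\partial f_2/\partial x_3(O)$ (an invertible linear map fixing $e_1,e_2$ and mapping the unit ball of $\|y\|_\E=\max\{|y_1|,|y_2|\}+|y_3|$ onto itself must have $a=b=0$ and $|c|=1$, where $c=\partial f_3/\partial x_3(O)$; this is a legitimate alternative to the paper's route, which instead iterates $H_\omega=h^{-1}\circ L_{\rho_{1/\omega}}\circ h\circ L_{\rho_\omega}$ and uses compactness of the isotropy group). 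But the step you flag as ``the real work'' is genuinely where your argument fails, and it cannot be repaired by any local computation at $O$. Differentiating $f_3(\zeta,\eta,\zeta\eta)=\zeta\eta$ in $\zeta$ and $\eta$ at the origin yields only the relation
\[
\frac{\partial^2 f_3}{\partial x_1\partial x_2}(O)+\frac{\partial f_3}{\partial x_3}(O)=1,
\]
which does not pin down $c$: for every $c\in\C^*$ the polynomial biholomorphism $h_c(x)=(x_1,x_2,(1-c)x_1x_2+cx_3)$ of $\C^3$ fixes the entire triangular surface pointwise, satisfies this relation, has $h_c'(O)=\mathrm{diag}(1,1,c)$, and for $|c|=1$ even preserves the indicatrix at $O$. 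So first-order and second-order data along $V$, plus the indicatrix constraint, are all consistent with $c\neq 1$. What rules out $c\neq1$ is only that $h_c$ fails to map $\E$ onto $\E$, and detecting that requires global information about $\E$.

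The paper supplies exactly this missing global input. Having shown $a=b=0$, it concludes that $h$ commutes with the rotation actions $L_{\rho_\omega}$ and $R_{\rho_\omega}$ (via Cartan applied to $H_\omega$), and the resulting homogeneity forces $h$ to have the global polynomial form $h(x)=(x_1,x_2,(1-c)x_1x_2+cx_3)$. It then uses the fact that an automorphism preserves the distinguished boundary $b\E=\{x:\ x_1=\bar x_2x_3,\ |x_2|\leq1,\ |x_3|=1\}$: for such boundary points $|h_3(x)|=\bigl|(1-c)|x_2|^2+c\bigr|$ must equal $1$ for all $|x_2|\leq1$, which forces $c=1$. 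You would need to add an argument of this kind (or some other genuinely global constraint) to close the gap; as written, your proof proves only that $f(x)=(x_1,x_2,(1-c)x_1x_2+cx_3)$ for some unimodular $c$, not that $f=\mathrm{id}_\E$.
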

\begin{proof}
Let $h\in\Aut\E$ fix all triangular points:  $h(x_1,x_2,x_1x_2)=(x_1,x_2,x_1x_2)$ for all $x_1,x_2 \in\D$.  We have
\begin{equation} \label{hdashO}
h'(O) = \left[\begin{array}{ccc} 1 & 0 & a \\ 0 & 1 & b \\ 0 & 0 & c\end{array}\right]
\end{equation}
for some $a,b,c \in \C$.
For $\omega\in \T$ let $H_\omega$ be the element $h^{-1}\circ L_{\rho_{1/\omega}} \circ h\circ L_{\rho_\omega}$ of $\Aut\E$.   Then $H_\omega(O)=(O)$ and
\[
H_\omega'(O)=h'(O)^{-1}\mathrm{diag}(\bar\omega,1,\bar\omega)h'(O)\mathrm{diag}(\omega,1,\omega)= \left[\begin{array}{ccc}1&0&0 \\ 0&1&b(\omega-1)\\
 0&0&1 \end{array}\right].
\]
If $H_\omega^n$ denotes the $n$th iterate of $H_\omega$ then
\[
(H_\omega^n)'(O) =H_\omega'(O)^n= \left[\begin{array} {ccc} 1&0&0\\0&1& nb(\omega-1)\\ 0&0&1 \end{array}\right].
\]
Now the isotropy group $K$ of the origin in $\E$, that is the group $\{f\in\Aut\E: f(O)=O\}$, is compact with respect to the topology of locally uniform convergence, the map $f\mapsto f'(O)$ is continuous on $\Aut\E$ and each $H_\omega^n \in K$.  Hence the matrices $(H_\omega^n)'(O), n\geq 1,$ are uniformly bounded.  It follows that $b=0$.  Similarly we have $a=0$ (replace $L_{\rho_\omega}$ by $R_{\rho_\omega}$ in the above argument).  Thus $h'(O)$ is diagonal and $H_\omega'(O)$ is the identity matrix.  By Cartan's theorem (e.g. \cite[Proposition 10.1.1]{krantz}) $H_\omega$ is the identity automorphism, and so $h\circ L_{\rho_\omega} = L_{\rho_\omega}\circ h$.  Similarly $h\circ R_{\rho_\omega} = R_{\rho_\omega}\circ h$.  If $h=(h_1,h_2,h_3)$ then, for $x\in\E$ and $\omega\in\T$,
\begin{eqnarray*}
h(\omega x_1,x_2,\omega x_3) &=& (\omega h_1(x), h_2(x),\omega h_3(x)), \\
h(x_1,\omega x_2,\omega x_3) &=& (h_1(x), \omega h_2(x), \omega h_3(x)).
\end{eqnarray*}
By the former equation, for fixed $x_2$, $h_2$ is homogeneous of degree $0$ in $x_1,x_3$ while $h_1,h_3$ are homogeneous of degree $1$ in $x_1,x_3$.  Similarly, for fixed $x_1$, $h_1$ is homogeneous of degree $0$ and $h_2,h_3$ are homogeneous of degree $1$ in $x_2,x_3$.  It follows that
\[
h(x) = (\al x_1, \beta x_2, \gamma x_1x_2 + \delta x_3)
\]
for some $\al,\beta,\gamma,\delta \in\C$.  Comparison with equation (\ref{hdashO}) shows that $\al=\beta=1, \delta=c \neq 0$.  Since $h$ fixes triangular points,  $\gamma=1-c$ and so $h(x)=(x_1,x_2,(1-c)x_1x_2+cx_3$).  We must prove that $c=1$.

Observe that  $h$ and $h^{-1}$ are polynomial maps and therefore extend continuously to $\bar\E$.  Hence $h$ induces an automorphism of the algebra $A(\E)$ of continuous scalar functions on $\bar\E$ that are analytic on $\E$, and consequently $h$ maps the Shilov boundary $b\E$ of $A(\E)$ to itself.  According to \cite[Theorem 7.1]{awy}, $x\in b\E$ if and only if $x_1=\bar x_2x_3, \ |x_2| \leq 1$ and $|x_3|=1$.  For $x=(\bar x_2x_3, x_2,x_3) \in b\E$ we have 
\[
 1 = |h_3(x)| = | (1-c)\bar x_2x_3x_2+cx_3| = \left|(1-c)|x_2|^2 + c\right|.
\]
Since this relation holds whenever $|x_2| \leq 1$ we have $c=1$ and hence  $h$ is the identity map.
\end{proof}
\begin{proof}[of Theorem~{\rm\ref{autE}}]
Recall that $x\in\E$ is said to be triangular if $x_1x_2=x_3$.  We denote by $\mathcal{T}$ the set of triangular points.  Note that $x$ is triangular if and only if $\Psi(.,x)$ is a constant map.  It follows that if $x\in\mathcal{T}$ and $\up\in\Aut\D$ then the maps
\[
\Psi(.,\up\cdot x)= \up\circ\Psi(.,x), \quad \Psi(., x\cdot \up)= \Psi(.,x)\circ\up
\]
are constant, and hence $\up\cdot x,\, x\cdot\up \in \mathcal{T}$.  It is clear that $\mathcal{T}$ is invariant under $F$ and under $L_\up, R_\up$  for any $\up\in\Aut\D$, and so $\mathcal{T}$ is invariant under the group $G$ given by equation (\ref{defG}).   Moreover, if $x\in\mathcal{T}$ and we define
\begin{equation} \label{defuchi}
\up(z)= \frac{z-x_1}{\bar x_1 z-1}, \quad \chi(z)=\frac{z-\bar x_2}{x_2 z-1}
\end{equation}
then we find that $\up\cdot x\cdot \chi = O$.  Hence $\mathcal{T}$ is the $G$-orbit of $O$ in $\E$.

Let $V$ denote the orbit of $O$ under $\Aut\E$.  Clearly $V \supset \mathcal{T}$.  By \cite[Satz 1]{kaup}, $V$ is a closed connected complex submanifold of $\E$.  Since $\E$ is inhomogeneous, $V \neq \E$.  If $V$ is a $3$-dimensional submanifold of $\E$ then $V$ is both open and closed in $\E$, and so by connectedness $V=\E$, a contradiction. Thus $V$ is a connected $2$-dimensional submanifold of $\E$.  Since $\mathcal{T}$ is closed in $\E$ it follows that $\mathcal{T}$ is an open and closed subset of $V$, hence by connectedness is equal to $V$.  That is, $\mathcal{T}$ is the orbit of $O$ under $\Aut\E$.  Hence every automorphism of $\E$ restricts to an automorphism of $\mathcal{T}$.

Consider any $f\in\Aut\E$.  Let $f(O)=x$, so that $x\in\mathcal{T}$.  Define $\up,\chi$ as in equations (\ref{defuchi}) and let $g(.)=\up\cdot f(.)\cdot\chi$.  Then $g\in\Aut\E$ and $g(O)=O$.   The restriction $g_\mathcal{T}$ of $g$ to $\mathcal{T}$ is an automorphism of $\mathcal{T}$.  Since $\mathcal{T}$ is isomorphic to the bidisc $\D^2$, $g_\mathcal{T}$ induces an automorphism of $\D^2$ that fixes $(0,0)$.  Hence $g_\mathcal{T}$ is one of the automorphisms
\[
(x_1,x_2,x_1x_2) \mapsto (\omega x_1,\eta x_2,\omega \eta x_1x_2) \mbox{  or  }
(x_1,x_2,x_1x_2) \mapsto (\eta x_2,\omega x_1,\omega \eta x_1x_2)
\]
for some $\omega ,\eta\in\T$.  In the former case let $h(.)=\rho_{\bar\omega }\cdot g(.)\cdot \rho_{\bar\eta}$: then $h\in\Aut\E$ and $h$ fixes $\mathcal{T}$
pointwise.  By Lemma \ref{fixes}, $h$ is the identity map $\mathrm{id}_\E$.  Thus $g=\rho_{\omega }\cdot\mathrm{id}_\E\cdot\rho_{\eta}$ and so
\[
f=L_\up R_\chi g =L_{\up \circ\rho_\omega} R_{\rho_\eta\circ\chi} \in G.
\]
In the latter case a similar argument shows that 
\[
f= L_{\up\circ \rho_\eta} R_{\rho_\omega\circ\chi}F \in G.
\]
Thus in either case $f\in G$.
\end{proof}

\section{The action of $\Aut\E$ on a foliation} \label{leaves}
Condition (v) of Theorem \ref{recap} shows that if $|\beta_1|+|\beta_2| < 1$ then the function
\[
\ph_{\beta_1\beta_2}:\D\to\C^3: \la \mapsto (\beta_1+\bar\beta_2\la, \beta_2+\bar\beta_1\la, \la)
\]
maps $\D$ into $\E$, and moreover every point of $\E$ lies on some disc $\ph_{\beta_1\beta_2}(\D)$.  If $x=\ph_{\beta_1\beta_2}(\la)$ then we find that
\[
\beta_1=\frac{x_1-\bar x_2\la}{1-|\la|^2}, \quad \beta_2=\frac{x_2-\bar x_1\la}{1-|\la|^2},
\]
and so $x$ lies in a {\em unique} disc $\ph_{\beta_1\beta_2}(\D)$.  Thus the discs $\ph_{\beta_1\beta_2}(\D), |\beta_1|+|\beta_2| < 1$, constitute a foliation of $\E$ by analytic discs, which we shall call the {\em $\beta$-foliation} of $\E$.  It is easily checked that $\Psi(\omega,.)$ is an analytic left inverse of $\ph_{\beta_1\beta_2}$ modulo $\Aut\D$ for any $\omega\in\T$, and hence the leaves of the $\beta$-foliation are complex geodesics of $\E$.

The action of $\Aut\E$ can be understood in terms of its action on the $\beta$-foliation.
\begin{theorem} \label{leafaction}
$\Aut\E$ permutes the leaves of the $\beta$-foliation transitively.  Specifically, if $x\in \ph_{\beta_1\beta_2}(\D)$ and $\up, \chi\in\Aut\D$ are given by
\begin{equation} \label{defu&chi}
\up(z)=\omega\frac{z-\al}{\bar\al z - 1}, \quad \chi(z)=\zeta\frac{z-\theta}{\bar\theta z-1}
\end{equation}
then $x\cdot\chi \in \ph_{\gamma_1\gamma_2}(\D)$ where
\begin{equation} \label{defgamma}
\gamma_1=  \frac{\beta_1(1-|\theta|^2)}{|1-\zeta \theta\beta_2|^2-|\theta\beta_1|^2} , \quad \gamma_2 = \frac{\bar\theta(1-|\beta_1|^2+|\beta_2|^2)-\zeta\beta_2-\bar\zeta\bar\theta^2\bar\beta_2}{|1-\zeta\theta\beta_2|^2-|\theta\beta_1|^2}
\end{equation}
and $\up\cdot x\in \ph_{\delta_1\delta_2}(\D)$ where
\begin{equation} \label{defdelta}
\delta_1= \omega\frac{\al(1-|\beta_2|^2+|\beta_1|^2)-\beta_1-\al^2\bar\beta_1}{|1-\bar\al\beta_1|^2-|\al\beta_2|^2},
\quad \delta_2= \frac{\beta_2(1-|\al|^2)}{|1-\bar\al\beta_1|^2-|\al\beta_2|^2} .
\end{equation}
Moreover, for any  $\beta_1,\beta_2$ such that $|\beta_1|+|\beta_2|<1$, if $\up,\chi$ in equations (\ref{defu&chi}) are chosen with $\omega=\zeta=1$,
\begin{eqnarray}\label{chaltheta}
\al &=& \xi_1\tanh\{\tfrac 12 \tanh^{-1}(|\beta_1|+|\beta_2|) + \tfrac 12 \tanh^{-1}(|\beta_1|-|\beta_2|)\},  \\
 \theta &=& \xi_2\tanh\{\tfrac 12 \tanh^{-1}(|\beta_1|+|\beta_2|) - \tfrac 12 \tanh^{-1}(|\beta_1|-|\beta_2|)\} \nn
\end{eqnarray}
where $\beta_1=|\beta_1|\xi_1, \, \bar\beta_2=|\beta_2|\xi_2$ and $\xi_1,\, \xi_2 \in\T$, then
\[
\up^{-1}\cdot\ph_{\beta_1\beta_2}(\D)\cdot\chi^{-1} = \ph_{00}(\D) = \{(0,0,\la): \la\in\D\}.
\]
\end{theorem}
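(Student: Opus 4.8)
The plan is to prove the three assertions of Theorem~\ref{leafaction} by direct computation, exploiting the algebraic structure of the $\diamond$-operation, and to deduce transitivity from the explicit inverse formulas at the end.

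First I would establish the formulas \eqref{defgamma} and \eqref{defdelta} for the images of a leaf under the right and left actions. The starting point is that $x\in\ph_{\beta_1\beta_2}(\D)$ means $x=(\beta_1+\bar\beta_2\la,\ \beta_2+\bar\beta_1\la,\ \la)$ for some $\la\in\D$. Since $\rho_\omega$ and $\rho_\zeta$ act simply by scaling (as recalled just before Lemma~\ref{fixes}), the substantive content is the action of the involution-type M\"obius maps $z\mapsto(z-\theta)/(\bar\theta z-1)$. So I would first treat $\chi_0(z)=(z-\theta)/(\bar\theta z-1)$, compute $\tau(\chi_0)\in\bar\E$, and then compute $x\diamond\tau(\chi_0)$ using \eqref{defdia}. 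The third coordinate of $x\diamond\tau(\chi_0)$ is a M\"obius image of $\la$, which gives the new disc parameter, and substituting back into the first two coordinates should yield, after simplification, expressions of the form $\gamma_1+\bar\gamma_2(\text{new }\la)$ and $\gamma_2+\bar\gamma_1(\text{new }\la)$ with $\gamma_1,\gamma_2$ independent of $\la$ --- this is exactly \eqref{defgamma}. The key identity to verify is that the denominators appearing are indeed $|1-\zeta\theta\beta_2|^2-|\theta\beta_1|^2$; I expect the factor $1-|\la|^2$ in the formulas $\beta_1=(x_1-\bar x_2\la)/(1-|\la|^2)$ etc.\ to be what produces the ``squared modulus minus squared modulus'' shape. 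The left-action formula \eqref{defdelta} follows from \eqref{defgamma} by applying the flip relation $FL_\up=R_{\up_*}$, or simply by the symmetric calculation with $\up$ in place of $\chi$; I would note that \eqref{defdelta} is obtained from \eqref{defgamma} under $\beta_1\leftrightarrow\beta_2$ together with the appropriate conjugation, which the symmetry of the $\beta$-foliation under $F$ makes transparent.

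Next I would prove transitivity by producing, for any admissible $\beta_1,\beta_2$, automorphisms carrying $\ph_{\beta_1\beta_2}(\D)$ to $\ph_{00}(\D)$. The natural strategy is to first reduce to the case where $\beta_1,\beta_2\geq0$ are real (using rotations $\rho_\omega,\rho_\zeta$ to absorb the unimodular factors $\xi_1,\xi_2$), and then to choose real parameters $\al,\theta$ so that simultaneously $\delta_1=0$ (killing the $\beta_1$-component from the left) and $\gamma_2=0$ (killing the $\beta_2$-component from the right), or rather so that the combined left-and-right action sends both $\gamma$'s to zero. Setting $\gamma_1=\gamma_2=0$ in \eqref{defgamma} after a right action, and then $\delta_2=0$ via a left action, leads to two equations in $\al,\theta$. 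Here the hyperbolic-tangent parametrization in \eqref{chaltheta} is the clever substitution that linearizes the system: writing $|\beta_1|+|\beta_2|=\tanh(s+t)$ and $|\beta_1|-|\beta_2|=\tanh(s-t)$ identifies $s,t$ with the half-sums and half-differences of the relevant hyperbolic angles, and the M\"obius composition translates into addition of these angles, so that the right choice of $\al=\xi_1\tanh s$, $\theta=\xi_2\tanh t$ exactly cancels. I would verify \eqref{chaltheta} by substituting these values of $\al,\theta$ into the composition of the left and right actions and checking that $\gamma_1=\gamma_2=0$ results; the addition formula $\tanh(p)\pm\tanh(q)$ over $1\pm\tanh p\tanh q$ is the engine of the simplification.

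The main obstacle I anticipate is the bookkeeping in the first step: verifying that the two coordinates $x_1,x_2$ of $x\diamond\tau(\chi)$, after one substitutes the M\"obius-transformed $\la$, genuinely collapse into the affine form $\gamma_j + \bar\gamma_{3-j}\cdot(\text{new }\la)$ with the stated $\gamma_j$. This is a priori a rational identity in $\la$ of moderate degree, and one must confirm that all the ``cross terms'' conspire correctly --- equivalently, that the right action really does permute leaves rather than merely mapping discs to discs of a different family. A clean way to organize this, which I would adopt, is to use the characterization of the $\beta$-foliation via condition (v) of Theorem~\ref{recap}: a point $x$ lies in $\ph_{\beta_1\beta_2}(\D)$ iff $x_1=\beta_1+\bar\beta_2x_3$ and $x_2=\beta_2+\bar\beta_1x_3$, and then one only has to check that the pair $(\gamma_1,\gamma_2)$ defined by \eqref{defgamma} satisfies these two linear relations for $x\cdot\chi$ --- i.e.\ substitute the coordinates of $x\cdot\chi$ from \eqref{defdia} and verify two scalar identities, rather than tracking a disc parametrization. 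This reduces the whole computation to polynomial identity verification, at which point it is routine, if tedious. Once the leaf-image formulas and \eqref{chaltheta} are in hand, transitivity is immediate since $\Aut\E\supseteq G$ contains all the $L_\up,R_\chi$ used, and the final displayed equation $\up^{-1}\cdot\ph_{\beta_1\beta_2}(\D)\cdot\chi^{-1}=\ph_{00}(\D)$ is just the statement that the chosen $\al,\theta$ (together with $\omega=\zeta=1$ and the preliminary rotations) send the given leaf to the central one.
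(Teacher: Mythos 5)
Your proposal is correct and follows essentially the same route as the paper: a direct computation showing that $L_\up$ and $R_\chi$ carry $\beta$-leaves to $\beta$-leaves with the stated parameters, followed by the hyperbolic-tangent parametrization and the $\tanh$ addition formula for transitivity (the paper runs that step in the opposite direction, verifying $\up\cdot\ph_{00}(\D)\cdot\chi=\ph_{\beta_1\beta_2}(\D)$, which avoids composing the two leaf-image formulas into a coupled system). The one point you should make explicit is that the opening assertion concerns all of $\Aut\E$ and not merely the subgroup $G$ generated by $L_\up$, $R_\chi$ and $F$, so you must invoke Theorem~\ref{autE} ($\Aut\E=G$) to conclude that \emph{every} automorphism permutes the leaves.
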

\begin{proof}
A straightforward calculation shows that
\begin{equation} \label{calcgamma}
\ph_{\beta_1\beta_2}(\la)\cdot\chi = \ph_{\gamma_1\gamma_2}(\mu)
\end{equation}
where 
\begin{equation} \label{muetc}
\mu = \eta \frac{\la+c}{\bar c\la+1}, \quad
 \eta = -\zeta\frac{1-\bar\zeta\bar\theta\bar\beta_2}{1-\zeta\theta\beta_2}, \quad
 c = -\frac{\bar\zeta\bar\theta\beta_1}{1-\bar\zeta\bar\theta\bar\beta_2}
\end{equation}
and $\gamma_1,\gamma_2$ are given by equations (\ref{defgamma}).  Similarly
\[
\up\cdot\ph_{\beta_1\beta_2}(\la) = \ph_{\delta_1\delta_2}(\nu)
\]
where 
\begin{equation}\label{nuetc}
\nu = \eta' \frac{\la+c'}{\bar c'\la+1}, \quad
 \eta' = -\omega\frac{1-\al\bar\beta_1}{1-\bar\al\beta_1}, \quad
 c' = -\frac{\al\beta_2}{1-\al\bar\beta_1}
\end{equation}
and $\delta_1,\delta_2$ are given by equations (\ref{defdelta}).  Since $|c|<1,|c'|<1, |\eta|=1$ and $|\eta'|=1$, both $L_\up$ and $R_\chi$ map any $\beta$-leaf bijectively onto another $\beta$-leaf.  As $F$ clearly does likewise, it follows that every automorphism of $\E$ permutes the leaves of the $\beta$-foliation.

 On applying equations (\ref{defdelta}) and (\ref{defgamma}) to the case $\beta_1=\beta_2=0$ we find that
\begin{equation} \label{00}
\up\cdot\ph_{00}(\D)\cdot\chi = \ph_{(\omega\al)0}(\D)\cdot\chi=\ph_{\gamma_1\gamma_2}(\D)
\end{equation}
where 
\begin{equation} \label{nextgam}
\gamma_1=\frac{\omega\al(1-|\theta|^2)}{1-|\al\theta|^2}, \quad
 \gamma_2=\frac{\bar\theta(1-|\al|^2)}{1-|\al\theta|^2}.
\end{equation}
Consider $\beta_1,\beta_2$ such that $|\beta_1|+|\beta_2|<1$.
Choose $\omega=\zeta=1$ and choose $\al,\, \theta$ according to equations (\ref{chaltheta}); note that $|\al|<1,\, |\theta|<1$ since $|\beta_1|+|\beta_2| <1$, and $\xi_1|\al|=\al,\, \bar\xi_2|\theta|=\bar\theta$.  Furthermore
\begin{eqnarray*}
\tanh^{-1}|\al|&=& \tfrac 12 \tanh^{-1}(|\beta_1|+|\beta_2|) + \tfrac 12 \tanh^{-1}(|\beta_1|-|\beta_2|), \\
 \tanh^{-1}|\theta| &=& \tfrac 12 \tanh^{-1}(|\beta_1|+|\beta_2|) - \tfrac 12 \tanh^{-1}(|\beta_1|-|\beta_2|),
\end{eqnarray*}
whence
\[
\tanh^{-1}|\al|+\tanh^{-1}|\theta| =\tanh^{-1}(|\beta_1|+|\beta_2|), \quad
\tanh^{-1}|\al|-\tanh^{-1}|\theta| =\tanh^{-1}(|\beta_1|-|\beta_2|).
\]
On taking $\tanh$ of both sides we obtain
%\begin{eqnarray*}
\[
\frac{|\al|+|\theta|}{1+|\al\theta|} = |\beta_1|+|\beta_2|, \quad
\frac{|\al|-|\theta|}{1-|\al\theta|} = |\beta_1|-|\beta_2|,
\]
and therefore
\begin{eqnarray*}
\beta_1&=&\xi_1|\beta_1|=\xi_1\frac{|\al|(1-|\theta|^2)}{1-|\al\theta|^2}=\frac{\al(1-|\theta|^2)}{1-|\al\theta|^2}=\gamma_1\\
\beta_2&=&\bar\xi_2|\beta_2|=\bar\xi_2\frac{|\theta|(1-|\al|^2)}{1-|\al\theta|^2}= \frac{\bar\theta(1-|\al|^2)}{1-|\al\theta|^2}=\gamma_2.
\end{eqnarray*}
Thus $\up\cdot\ph_{00}(\D)\cdot\chi=\ph_{\beta_1\beta_2}(\D)$, as required.  It follows that the action of $\Aut\E$ on the set of $\beta$-leaves is transitive.
\end{proof} 
The theorem shows that the orbit of any point of $\E$ under $\Aut\E$ contains a point of the form $(0,0,\la)$ with $\la\in\D$; the application of a further rotation shows that we may take $0\leq\la<1$.  The calculations above allow us to be precise.
\begin{theorem}
Let $x\in\E$.  The orbit of $x$ under $\Aut\E$ contains a unique point of the form $(0,0,r)$ with $r\in [0,1)$.  If $x=(\beta_1+\bar\beta_2\la,\beta_2+\bar\beta_1\la,\la)$ then $r$ is given by
\[
 r = \left|\frac{\la-\al\bar\theta}{\bar\al\theta\la-1}\right|
\]
where $\al,\, \theta$ are given by equations (\ref{chaltheta}).
\end{theorem}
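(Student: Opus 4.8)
The plan is to use the transitivity of $\Aut\E$ on the leaves of the $\beta$-foliation (Theorem~\ref{leafaction}) to carry $x$ onto the distinguished leaf $\ph_{00}(\D)=\{(0,0,\mu):\mu\in\D\}$, and then to read off the resulting parameter.

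For existence I would write $x=\ph_{\beta_1\beta_2}(\la)$, where $\la=x_3$ and $\beta_1,\beta_2$ are determined as in Section~\ref{leaves}, and take $\up,\chi$ to be the maps of~(\ref{defu&chi}) with $\omega=\zeta=1$ and $\al,\theta$ given by~(\ref{chaltheta}). Then $\up(z)=\Psi(z,(\al,\bar\al,1))$ and $\chi(z)=\Psi(z,(\theta,\bar\theta,1))$, both $\up$ and $\chi$ are involutions, and so $L_\up R_\chi$ is an involution of $\E$. By the last part of Theorem~\ref{leafaction}, $L_\up R_\chi$ carries $\ph_{\beta_1\beta_2}(\D)$ onto $\ph_{00}(\D)$; being the restriction to a complex geodesic of an automorphism of $\E$, it is a biholomorphism of one disc onto the other, so $\nu:=\ph_{00}^{-1}\circ L_\up R_\chi\circ\ph_{\beta_1\beta_2}$ is an automorphism of $\D$ and $\up\cdot x\cdot\chi=(0,0,\nu(\la))$. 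It then remains only to locate the zero of $\nu$. Since $L_\up R_\chi$ is an involution, $\nu(\la)=0$ precisely when $x=L_\up R_\chi(O)$; I would evaluate the latter from formula~(\ref{defdia}), getting $O\cdot\chi=O\dia(\theta,\bar\theta,1)=(0,\bar\theta,0)$ and then $\up\cdot(0,\bar\theta,0)=(\al,\bar\al,1)\dia(0,\bar\theta,0)=(\al,\bar\theta,\al\bar\theta)$. This point lies on $\ph_{\beta_1\beta_2}(\D)$ and has third coordinate $\al\bar\theta$, hence equals $\ph_{\beta_1\beta_2}(\al\bar\theta)$, so $\nu$ vanishes at $\la=\al\bar\theta$ and therefore $\nu(\la)=\kappa(\la-\al\bar\theta)/(\bar\al\theta\la-1)$ for some $\kappa\in\T$. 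A final rotation $\rho_\omega$, which sends $(0,0,\nu(\la))$ to $(0,0,\omega\nu(\la))$, then takes the image into $\{(0,0,r):r\in[0,1)\}$; since $|\al\bar\theta|<1$ and $\la\in\D$, this gives $r=|\nu(\la)|=|(\la-\al\bar\theta)/(\bar\al\theta\la-1)|\in[0,1)$, the claimed formula.

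For uniqueness, suppose $f\in\Aut\E$ and $f(0,0,r)=(0,0,r')$ with $r,r'\in[0,1)$. Every automorphism of $\E$ permutes the leaves of the $\beta$-foliation, so $f(\ph_{00}(\D))$ is a leaf; it contains $f(0,0,r)=(0,0,r')=\ph_{00}(r')$, and since the $\beta$-foliation is a genuine foliation $\ph_{00}(\D)$ is the only leaf through that point, so $f$ maps $\ph_{00}(\D)$ onto itself and induces an automorphism $g=\ph_{00}^{-1}\circ f\circ\ph_{00}$ of $\D$. As $f$ maps the set $\mathcal{T}$ of triangular points to itself (proof of Theorem~\ref{autE}) and $\ph_{00}(\D)\cap\mathcal{T}=\{O\}$, it follows that $f(O)=O$, so $g(0)=0$ and $g$ is a rotation; hence $(0,0,r')=\ph_{00}(g(r))=(0,0,\omega r)$ for some $\omega\in\T$, and since $r,r'\geq0$ this forces $r=r'$.

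The only substantive computation is the identification of the zero of $\nu$, and that is where I expect the one mild obstacle: the obvious route would compose the two M\"obius maps of~(\ref{muetc}) and~(\ref{nuetc}) and simplify, which is unpleasant, whereas the argument above sidesteps this entirely by using that $L_\up R_\chi$ is an involution and evaluating it at $O$ via the two one-line $\dia$-products displayed above.
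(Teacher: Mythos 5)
Your proof is correct, and its overall strategy --- apply the last part of Theorem \ref{leafaction} with $\omega=\zeta=1$ and $\al,\theta$ as in (\ref{chaltheta}) to carry the leaf through $x$ onto $\ph_{00}(\D)$, identify the induced automorphism of $\D$, and finish with a rotation --- is exactly the paper's. You diverge in the two computations. For the formula for $r$, the paper does compose the M\"obius maps (\ref{nuetc}) and (\ref{muetc}), but this is less unpleasant than you fear because the first is applied with $\beta_1=\beta_2=0$ (giving simply $\nu=-\omega z$) and the second with $\beta_2=0$, so one line yields $\up\cdot(0,0,z)\cdot\chi=\ph_{\beta_1\beta_2}\bigl((z+\al\bar\theta)/(\bar\al\theta z+1)\bigr)$, which is then inverted. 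Your alternative --- observe that $\up,\chi$ are involutions, hence so is $L_\up R_\chi$; locate the zero of $\nu$ by evaluating $L_\up R_\chi(O)=(\al,\bar\theta,\al\bar\theta)=\ph_{\beta_1\beta_2}(\al\bar\theta)$ via two short $\dia$-products; and invoke the general form of a disc automorphism --- is a clean and correct substitute that pins down $\nu$ only up to a unimodular constant, which is all that $r=|\nu(\la)|$ requires. For uniqueness the routes are genuinely different: the paper writes the hypothesised automorphism as $L_\up R_\chi F^{\nu}$ using Theorem \ref{autE}, discards $F$, and reads off $\al=\theta=0$ from the equality $\ph_{(\omega\al)0}(-\omega r)=\ph_{0\bar\theta}(-\zeta s)$ of points each lying in a unique leaf; you instead use only the leaf-permutation property together with the invariance of the triangular set $\mathcal{T}$ and the fact that $\ph_{00}(\D)\cap\mathcal{T}=\{O\}$ to force the induced self-map of $\D$ to fix $0$ and hence be a rotation. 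Both are valid; yours is slightly more conceptual and avoids the explicit formulas, though it still rests on Theorem \ref{autE} through the assertion that every automorphism of $\E$ permutes the $\beta$-leaves.
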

\begin{proof}
 As in equations (\ref{00}) and (\ref{nextgam}) we have, for $\up, \chi$ given by equations (\ref{defu&chi}) and $z\in\D$,
\[
\up\cdot\ph_{00}(z)\cdot\chi = \ph_{(\omega\al)0}(\nu)\cdot\chi=\ph_{\gamma_1\gamma_2}(\mu)
\]
where (by equations (\ref{muetc}), (\ref{nuetc}))
\[
 \nu= -\omega z, \quad \mu = \omega\zeta\frac{ z+\bar\zeta\al\bar\theta}{\zeta\bar\al\theta z+1}.
\]
Now choose $\omega=\zeta=1$ and choose $\al, \theta$ as in equations (\ref{chaltheta}).
As we showed above, $\gamma_1=\beta_1$ and $\gamma_2=\beta_2$.  Thus
\[
\up\cdot(0,0,z)\cdot\chi=\ph_{\beta_1\beta_2}\left(\frac{ z+\al\bar\theta}{\bar\al\theta z+1}\right).
\]
 Substitute $z=(\la-\bar\al\theta)/(-\al\bar\theta\la+1)$ and apply a suitable rotation $\rho$ to obtain
\[
\up\cdot\rho\cdot(0,0,r)\cdot\chi = \ph_{\beta_1\beta_2}(\la)
\]
with $r$ as in the theorem.

It remains to prove the uniqueness of $r$.   Suppose that $(0,0,r), (0,0,s)$ both lie in the orbit of $x$ with $0\leq r,s<1$; then there exist $\up, \chi \in\Aut\D$ such that $\up\cdot(0,0,r) =(0,0,s)\cdot\chi$ (we can ignore $F$ here since both points are fixed by $F$).
That is, if $\up, \chi$ are given by equations (\ref{defu&chi}), $\ph_{(\omega\al)0}(-\omega r)=\ph_{0\bar\theta}(-\zeta s)$.   It follows that $\al=\theta=0$ and $\omega r=\zeta s$.  Since $r,s \geq 0$ we have $r=s$.
\end{proof}
In \cite[Theorem 3.4.4]{alaa} it is shown by a different method that, for every $x\in\E$, there exist $\up,\chi\in\Aut\D$ such that $\up\cdot x\cdot\chi= (0,0,r)$ for some $r\in[0,1)$;  different formulae for $\up, \chi$ and $r$ are obtained.

\section{Concluding remarks} \label{conclud}
The original purpose for the study of both the tetrablock and the symmetrised bidisc  $\G$ was to try and solve special cases of the $\mu$-synthesis problem (\cite[Section 9]{awy}, \cite{alaa,AY1,AY2}) which is a refinement of classically-studied interpolation problems.  Although the approach has indeed led to some new results which are relevant to the motivating engineering problem, the results so far are too special to be of great import in applications.   It is reasonable to hope that a better understanding of the complex geometry of these domains and analogous ones will in the future provide results that will be very useful for the theory of $H^\infty$ control.  Meanwhile, the study of $\G$ has proved to be of considerable interest to specialists in several complex variables (e.g. \cite{EZ,JP}; numerous authors have developed the theory of $\G$ and its higher-dimensional analogues further).  The appeal of these domains is that they admit a rich and explicit function theory that is in some ways close to that of classical domains, such as Cartan domains, but in others has new and subtle features.  The present paper is more ``several complex variables'' than ``$H^\infty$ control'': it addresses some of the basic questions one would ask about any domain of interest.   It does however have some implications for cases of the $\mu$-synthesis problem.
For example, suppose we are given $2\times 2$ complex matrices $A, B$ with $A=[A_1 \,  A_2]$ strictly triangular (but not $0$) and $B=[B_1 \, B_2]=[b_{ij}]$ not diagonal, and we are asked whether there exists an analytic matrix function $F$ in the unit disc such that $F(0)=A, F'(0)=B$ and $\mu(F(\la))\leq 1$ for all $\la\in\D$ (here $\mu$ is a certain cost function lying between the spectral radius and the operator norm; see \cite[Section 9]{awy}).  It follows from Theorem \ref{newschw} above that such an $F$ exists if and only if $\max\{|b_{11}|,|b_{22}|\} + |A_1\wedge B_2+A_2\wedge B_1| \leq 1$.  Such explicit criteria for $\mu$-synthesis problems are hard to come by in general.  Again, knowledge of the automorphisms of $\E$ reveals a non-obvious equivalence between certain $\mu$-synthesis problems.

 \noindent N. J. Young\\
School of Mathematics\\
Leeds University,
England

\end{document}